\newtheorem {lemma}{Lemma}[section]
\newtheorem {theorem} {Theorem}[section]
\newtheorem {corollary}{Corollary}[section]
\begin{document}

\title{\bf Normalized Laplacian eigenvalues of hypergraphs}

\author{Leyou Xu\footnote{Email: leyouxu@m.scnu.edu.cn},
Bo Zhou\footnote{Email: zhoubo@m.scnu.edu.cn}\\
School of Mathematical Sciences, South China Normal University\\
Guangzhou 510631, P.R. China}

\date{}
\maketitle

\begin{abstract}
In this paper, we give tight bounds for the normalized Laplacian eigenvalues of hypergraphs that are not necessarily uniform, and
provide an edge version interlacing theorem,  a Cheeger inequality, and a discrepancy inequality  that are related to the normalized Laplacian eigenvalues for uniform hypergraphs. \\ \\
{\bf Keywords: } hypergraph, normalized Laplacian eigenvalue,
interlacing inequality, Cheeger inequality, discrepancy inequality\\ \\
{\bf Mathematics Subject Classifcation:} 05C65, 05C69, 05C50
\end{abstract}

\section{Introduction}\label{sec1}

Let $H$ be a hypergraph with vertex set $V(H)$ and edge set $E(H)$, where every edge is a subset of $V(H)$ containing at least two vertices.
If each edge consists of $k$ vertices, we say $H$ is $k$-uniform.
For $v\in V(H)$, denote by $E_H(v)$ ($E_v$ for convenience) the set of edges of $H$ containing $v$, i.e., $E_H(v)=\{e\in E(H):v\in e\}$.
Let $d_H(v)=|E_H(v)|$ ($d_v$ for convenience) be the degree of $v$ in $H$.
Let  $\{u,v\}\subseteq V(H)$. If there exists an edge containing both $u$ and $v$, then we say $u$ and $v$ are adjacent and otherwise we say $u$ and $v$ are non-adjacent.
Denote by $E_H(u,v)$ ($E_{uv}$ for convenience) the set of edges containing both $u$ and $v$ in $H$, i.e., $E_H(u,v)=\{e\in E(H):\{u,v\}\subseteq e\}$.
Let $e_H(u,v)=|E_H(u,v)|$ ($e_{uv}$ for convenience) be the co-degree of $u$ and $v$ in $H$.

For any nonempty proper vertex subset $S$ of a hypergraph $H$, the volume of $S$ is defined as
\[
\mathrm{vol}(S)=\sum_{v\in S}d_H(v),
\]
and the edge boundary $\partial S$ of $S$ is defined as the multiset of edges of $H$ with at least one vertex in $S$ and at least one vertex outside $S$, where the multiplicity of an edge $e\in \partial S$ is
$|e\cap S||e\cap (V(H)\setminus S)|$. It is easy to see that $|\partial S|=\sum_{u\in S}\sum_{v\in V(H)\setminus S}e_H(u,v)$.
The Cheeger constant (isoperimetric number) $h(H)$ of a hypergraph $H$ is defined as \cite{Mu}
\[
h(H)=\min\left\{\frac{|\partial S|}{\mathrm{vol}(S)}:\emptyset\ne S\subset V(H),\mathrm{vol}(S)\le \mathrm{vol}(V(H)\setminus S) \right\}.
\]

For a hypergraph $H$, Banerjee \cite{Ban} proposed  a new version adjacency matrix $A(H)=(a_{uv})_{u,v\in V(H)}$ of $H$ (with a fixed ordering of the vertices), where
\[
a_{uv}=\begin{cases}
\sum_{e\in E_{uv}}\frac{1}{|e|-1}, & \mbox{if $u$ and $v$ are adjacent},\\
0, & \mbox{otherwise.}
\end{cases}
\]
We mention that another closely related version of adjacency matrix of $H$ was proposed in \cite{FL} as the $n\times n$ matrix $B(H)=(b_{uv})_{u,v\in V(H)}$ where
\begin{eqnarray*}
b_{uv}=\begin{cases}
e_{uv}, & \mbox{if $u\neq v$, and $u,v\in V(H)$},\\[1mm]
0, & \mbox{if $u=v\in V(H)$}.
\end{cases}
\end{eqnarray*}
If $H$ is $k$-uniform, then $B(H)=(k-1)A(H)$. So there is no essential difference between  the spectral properties based on the two versions of adjacency matrix of a $k$-uniform hypergraph.
Note that for any $u\in V(H)$,
\[
\sum_{v\in V(H)\setminus\{u\}}a_{uv}=\sum_{v\in V(H)\setminus\{u\}}\sum_{e\in E_{uv}}\frac{1}{|e|-1}=\sum_{e\in E_u}\sum_{v\in e\setminus\{u\}}\frac{1}{|e|-1}=\sum_{e\in E_u}1=d_u.
\]
Let $D(H)=\mbox{diag}\{d_H(u): u\in V(H)\}$ be the degree diagonal matrix of  $H$ (with the same fixed ordering of the vertices as above).
The Laplacian matrix of $H$ is the matrix $L(H)=D(H)-A(H)$, see \cite{Ban,SSP}.
The normalized Laplacian matrix of  $H$ is defined as
\[
\mathcal{L}(H)=D(H)^{-1/2}L(H)D(H)^{-1/2}
\]
with the convention that the $(u,u)$-entry of $D(H)^{-1}$ is equal to $0$  if $d_H(u)=0$.
Clearly, this is a generalization of the normalized Laplacian matrix of a ordinary graph.
Note that the matrix $\mathcal{L}(H)$ is symmetric. So it has $|V(H)|$ real eigenvalues.
Denote by $\lambda_1(H)\ge\dots\ge \lambda_{|V(H)|}(H)$ the eigenvalues of $\mathcal{L}(H)$, which we call the normalized Laplacian eigenvalues of $H$.

The normalized normalized Laplacian eigenvalues  of ordinary graphs have been extensively, see \cite{chung}.

An edge version of Cauchy's interlacing theorem on the normalized Laplacian eigenvalues for an ordinary graph has been given in \cite{chen}. More work in this direction may be found in \cite{Bu,HJ}.

Cheeger inequality is the fundamental result in spectral graph theory, which relates
the eigenvalues of an associated matrix with combinatorial property of a graph via
 the Cheeger constant.
As the discrete analogues  of the continuous counterpart, there are
such inequalities for different versions of the Laplacian matrix (operator)
and various isoperimetric variations of  the Cheeger constant \cite{AM,Ch,chung,chung2,Mo,SSP}.
For a hypergraph $H$ with $\emptyset\ne S\subset V(H)$, $E(S, V(H)\setminus S)$ denotes the set of edges of $H$ with at least one vertex in $S$ and at least one vertex outside $S$.
By defining another version of the Cheeger constant
as
$\min\left\{\frac{|E(S, V(H)\setminus S)|}{|S|}: \emptyset\ne S\subset V(H), |S|\le |V(H)\setminus S|\right\}$,
Banerjee \cite{Ban} established the (Cheeger) inequality relating the second smallest Laplacian eigenvalue with this Cheeger constant. Mulas \cite{Mu} gave a Cheeger inequality via a different Laplacian matrix for hypergraphs. Work in this line was  studied for oriented hypergraphs by Mulas  and Zhang \cite{MZ}.

In the literatures, there are  different ways to measure the discrepancy of a graph or hypergraph \cite{BC,ch1991}. Chung \cite{ch1991} considered the volume of  each vertex subset, and take the `expectation' to be proportional to the product of the volumes. In this way, the discrepancy can be bounded using the spectral gap of the normalized Laplacian matrix of a graph. In \cite{Ch, chung2},
the discrepancy inequality bounds the edge distribution between two sets based on the spectral gap.
This was also extended to singular values of the adjacency matrix in \cite{BN} for variations of discrepancy.

In this paper, we address the normalized Laplacian eigenvalues of hypergraphs.
The rest of the paper is organized as follows. The preliminaries are provided in Section \ref{sec2}.
 In Section \ref{sec3}, we establish tight bounds for the largest and the second smallest normalized Laplacian eigenvalues of hypergraphs that are not necessarily uniform.
In Section \ref{sec4}, we prove an edge version of Cauchy's interlacing theorem  on the normalized Laplacian eigenvalues of $k$-uniform hypergraphs.
In Section \ref{sec5}, we  present the Cheeger inequality that relates the second smallest normalized Laplacian eigenvalue and  the Cheeger constant for  $k$-uniform hypergraphs.
In Section \ref{sec6}, we gave a discrepancy inequality   for $k$-uniform hypergraphs that bounds the edge distribution between two sets based on the normalized Laplacian spectral gap.
The new results exhibit some significant differences between the normalized Laplacian eihenvalues of graphs and hypergraphs.

\section{Preliminaries} \label{sec2}

If a hypergraph $H$ contains isolated vertices, then the row and column of $\mathcal{L}(H)$ corresponding to an isolated vertex in $H$ are both zero.
Let $H'$ be the hypergraph obtained from $H$ by deleting all isolated vertices.
Then each eigenvalue of $\mathcal{L}(H')$ is also an eigenvalue of $L(H)$, and $0$ is an eigenvalue of $\mathcal{L}(H)$ with multiplicity $s$ more than that of $\mathcal{L}(H')$, where $s$ is the number of isolated vertices in $H$.
So we only consider hypergraphs without isolated vertices in this paper  unless otherwise stated.
Let $\mathbf{x}$ be a nonzero $n$-dimensional column vector indexed by the vertices of a hypergraph $H$ on $n$ vertices. Then
\begin{align*}
\mathbf{x}^\top L(H)\mathbf{x}
&=\sum_{u\in V(H)}x_u\left(d_ux_u-\sum_{v\ne u}\sum_{e\in E_{uv}}\frac{1}{|e|-1}x_v\right)\\
&=\sum_{u\in V(H)}x_u\sum_{v\in V(H)\setminus\{u\}}\sum_{e\in E_{uv}}\frac{1}{|e|-1}\left(x_u-x_v\right)\\
&=\sum_{e\in E(H)}\frac{1}{|e|-1}\sum_{u\in e}x_u\sum_{v\in e\setminus\{u\}}(x_u-x_v)\\
&=\sum_{e\in E(H)}\frac{1}{|e|-1}\sum_{\{u,v\}\subseteq e}\left(x_u-x_v\right)^2,
\end{align*}
so
\begin{align*}
\mathbf{x}^\top \mathcal{L}(H)\mathbf{x}
=\sum_{e\in E(H)}\frac{1}{|e|-1}\sum_{\{u,v\}\subseteq e}\left(\frac{x_u}{\sqrt{d_u}}-\frac{x_v}{\sqrt{d_v}}\right)^2.
\end{align*}

For vertices $u$ and $v$ of a hypergraph $H$, a path from $u$ to $v$ in $H$ is an
alternating sequence of distinct vertices and edges of the form $v_1, e_1, v_2, e_2,\dots ,v_k$
such that $\{v_i, v_{i+1}\}\subseteq e_i$ for all $1\le i\le k-1$, where $v_1=u$ and $v_k=v$.
 If there is a path from $u$ to $v$ for any $u,v\in V(H)$, then we say that $H$ is connected.

Denote by $\mathbf{1}_n$ and $\mathbf{0}_n$ the $n$-dimensional all ones column vector and the zero vector of order $n$, respectively.

\begin{theorem}\label{n0}
If $H$ is a hypergraph on $n$ vertices, then $\mathcal{L}(H)$ is semi-definite, $\lambda_n(H)=0$ with a corresponding eigenvector $D(H)^{1/2}\mathbf{1}_n$,  and the multiplicity
of $\lambda_n(H)=0$ is exactly the number of components of $H$.
\end{theorem}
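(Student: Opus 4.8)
The plan is to lean entirely on the quadratic-form identity
\[
\mathbf{x}^\top \mathcal{L}(H)\mathbf{x}
=\sum_{e\in E(H)}\frac{1}{|e|-1}\sum_{\{u,v\}\subseteq e}\left(\frac{x_u}{\sqrt{d_u}}-\frac{x_v}{\sqrt{d_v}}\right)^2
\]
derived in the preliminaries. First I would note that the right-hand side is a sum of nonnegative terms, so $\mathbf{x}^\top\mathcal{L}(H)\mathbf{x}\ge 0$ for every $\mathbf{x}$; this proves that $\mathcal{L}(H)$ is positive semi-definite, whence all its eigenvalues are nonnegative and in particular $\lambda_n(H)\ge 0$. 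To see that $0$ is actually attained with the stated eigenvector, I would substitute $\mathbf{x}=D(H)^{1/2}\mathbf{1}_n$: then $\frac{x_u}{\sqrt{d_u}}=1$ for every $u$, so every summand vanishes and $\mathbf{x}^\top\mathcal{L}(H)\mathbf{x}=0$. Since a positive semi-definite matrix annihilates exactly those vectors on which its quadratic form vanishes, $D(H)^{1/2}\mathbf{1}_n$ lies in the kernel of $\mathcal{L}(H)$, giving $\lambda_n(H)=0$ with this eigenvector.

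For the multiplicity statement, I would identify the kernel of $\mathcal{L}(H)$ precisely. Because $\mathcal{L}(H)$ is positive semi-definite, $\mathbf{x}\in\ker\mathcal{L}(H)$ if and only if $\mathbf{x}^\top\mathcal{L}(H)\mathbf{x}=0$, which by the identity forces $\frac{x_u}{\sqrt{d_u}}=\frac{x_v}{\sqrt{d_v}}$ for every pair $\{u,v\}$ contained in a common edge. Setting $\mathbf{y}=D(H)^{-1/2}\mathbf{x}$ (legitimate since $H$ has no isolated vertices, so $D(H)^{1/2}$ is invertible), this says exactly that $y_u=y_v$ whenever $u$ and $v$ are adjacent.

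Next I would propagate this equality along paths: if $u$ and $v$ lie in the same component, a path $u=v_1,e_1,v_2,\dots,v_k=v$ with $\{v_i,v_{i+1}\}\subseteq e_i$ gives $y_{v_1}=y_{v_2}=\dots=y_{v_k}$, so $\mathbf{y}$ is constant on each connected component; conversely any $\mathbf{y}$ that is constant on each component clearly satisfies the adjacency condition. Thus the linear isomorphism $\mathbf{y}\mapsto D(H)^{1/2}\mathbf{y}$ carries the space of component-constant vectors bijectively onto $\ker\mathcal{L}(H)$. The former space has dimension equal to the number of components of $H$ (one free value per component), so $\dim\ker\mathcal{L}(H)$, that is, the multiplicity of $\lambda_n(H)=0$, equals the number of components.

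The only genuinely delicate point is the ``only if'' direction of the kernel characterization, namely passing from the vanishing of the quadratic form to constancy of $\mathbf{y}$ on components; this is where connectedness enters through the path argument, and where one must use the standing no-isolated-vertices assumption to guarantee that $D(H)^{1/2}$ is invertible, so that the change of variables $\mathbf{x}\leftrightarrow\mathbf{y}$ is a genuine bijection. Everything else is a direct reading of the quadratic-form identity.
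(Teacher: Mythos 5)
Your argument is correct and follows essentially the same route as the paper: both establish semi-definiteness from the quadratic-form identity, exhibit $D(H)^{1/2}\mathbf{1}_n$ in the kernel, and characterize the kernel via $\mathbf{y}=D(H)^{-1/2}\mathbf{x}$ being constant on components by propagating equality along paths. Your only (harmless) variation is deducing $\mathcal{L}(H)D(H)^{1/2}\mathbf{1}_n=\mathbf{0}_n$ from the vanishing of the quadratic form together with positive semi-definiteness, where the paper computes $D(H)^{-1/2}L(H)\mathbf{1}_n=\mathbf{0}_n$ directly.
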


\begin{proof}
Let $\mathbf{x}$ be any nonzero $n$-dimensional column vector indexed by the vertices of $H$.
Then
\[
\mathbf{x}^\top \mathcal{L}(H)\mathbf{x}
=\sum_{e\in E(H)}\frac{1}{|e|-1}\sum_{\{u,v\}\subseteq e}\left(\frac{x_u}{\sqrt{d_u}}-\frac{x_v}{\sqrt{d_v}}\right)^2\ge 0.
\]
So $\mathcal{L}(H)$ is semi-definite.

Note that  $\mathcal{L}(H)D(H)^{1/2}\mathbf{1}_n=D(H)^{-1/2}L(H)\mathbf{1}_n=D(H)^{-1/2}\mathbf{0}_n=\mathbf{0}_n$.
So $\lambda_n(H)=0$ with a corresponding eigenvector $D(H)^{1/2}\mathbf{1}_n$.

Furthermore, suppose that $\mathbf{x}$ is an eigenvector associated with $\lambda_n(H)=0$. That is,
$\mathcal{L}(H)\mathbf{x}=\mathbf{0}_n$, equivalently, $\mathbf{x}^\top \mathcal{L}(H)\mathbf{x}=0$ as $\mathcal{L}(H)$ is semi-definite positive. Let $\mathbf{y}=D(H)^{-1/2}\mathbf{x}$. Then
$y_u=y_v$ for two adjacent  vertices $u$ and $v$ of $H$.
Any two different vertices $u'$ and $v'$ in a component of $H$ are connected by a path, so $y_{u'}=y_{v'}$.
Denote by $c$ the number of components of $H$. Then the system $\mathcal{L}(H)\mathbf{x}$ has exactly  $c$ basic solutions, one for each component of $H$.
This shows that the multiplicity of $\lambda_n(H)=0$ is exactly $c$.
\end{proof}

Given an $n$-vertex hypergraph $H$, as the trace of $\mathcal{L}(H)$ is the number of non-isolated vertices in $H$ and $H$ contains no isolated vertices, i.e.,
$\sum_{i=1}^n\lambda_i(H)= n$. 
As $\lambda_n(H)=0$, one has $\lambda_{n-1}(H)\le \frac{n}{n-1}\le \lambda_1(H)$.

Let $H$ be a hypergraph of order $n$.
Let  $\mathbf{y}=D(H)^{-1/2}\mathbf{x}\in \mathbb{R}^n\setminus\{\mathbf{0}_n\}$. Then
\begin{equation}\label{xz24}
\frac{\mathbf{x}^\top \mathcal{L}(H)\mathbf{x}}{\mathbf{x}^\top\mathbf{x}}=\frac{\sum_{e\in E(H)}\frac{1}{|e|-1}\sum_{\{u,v\}\subseteq e}(y_u-y_v)^2}{\sum_{u\in V(H)}d_uy_u^2}.
\end{equation}
We denote by $R_H(\mathbf{y})$ the expression in the right hand side of \eqref{xz24}.

\begin{lemma}\label{n-}
Let $H$ be an  $n$-vertex  $k$-uniform hypergraph.
Then
\[
\lambda_1(H)=\frac{1}{k-1}\max_{\mathbf{y}\in \mathbb{R}^n\setminus\{\mathbf{0}_n\}}\frac{\sum_{\{u,v\}\subseteq V(H)}e_{uv}(y_u-y_v)^2}{\sum_{u\in V(H)}d_uy_u^2}
\]
and
\[
\lambda_{n-1}(H)=\frac{1}{k-1}\min_{\mathbf{y}\in \mathbb{R}^n\setminus\{\mathbf{0}_n\}\atop\mathbf{y}\bot D(H)\mathbf{1}_n}\frac{\sum_{\{u,v\}\subseteq V(H)}e_{uv}(y_u-y_v)^2}{\sum_{u\in V(H)}d_uy_u^2}.
\]
\end{lemma}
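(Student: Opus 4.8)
The plan is to combine the Courant--Fischer variational characterization of the eigenvalues of the symmetric matrix $\mathcal{L}(H)$ with the change of variables $\mathbf{y}=D(H)^{-1/2}\mathbf{x}$ already used in \eqref{xz24}. First I would record the two Rayleigh-quotient formulas. Since $\lambda_1(H)$ is the largest eigenvalue,
\[
\lambda_1(H)=\max_{\mathbf{x}\in\mathbb{R}^n\setminus\{\mathbf{0}_n\}}\frac{\mathbf{x}^\top\mathcal{L}(H)\mathbf{x}}{\mathbf{x}^\top\mathbf{x}}.
\]
By Theorem \ref{n0}, $\lambda_n(H)=0$ is the smallest eigenvalue with eigenvector $D(H)^{1/2}\mathbf{1}_n$, so the min--max theorem gives
\[
\lambda_{n-1}(H)=\min_{\mathbf{x}\ne\mathbf{0}_n,\;\mathbf{x}\bot D(H)^{1/2}\mathbf{1}_n}\frac{\mathbf{x}^\top\mathcal{L}(H)\mathbf{x}}{\mathbf{x}^\top\mathbf{x}}.
\]
I would note that this characterization of the second smallest eigenvalue remains valid even when $\lambda_n(H)$ has multiplicity greater than one (that is, when $H$ is disconnected), because $D(H)^{1/2}\mathbf{1}_n$ is a genuine eigenvector for the minimal eigenvalue, so restricting to its orthogonal complement still produces the next eigenvalue.

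Next, because $H$ has no isolated vertices, $D(H)$ is invertible and $D(H)^{-1/2}$ is a linear bijection of $\mathbb{R}^n$; hence as $\mathbf{x}$ ranges over all nonzero vectors the image $\mathbf{y}=D(H)^{-1/2}\mathbf{x}$ does likewise. By \eqref{xz24} the Rayleigh quotient equals $R_H(\mathbf{y})$, and since every edge of a $k$-uniform hypergraph satisfies $|e|=k$, the factor $\frac{1}{|e|-1}$ is the constant $\frac{1}{k-1}$ and pulls outside the sum. The one combinatorial step is to reindex the double sum $\sum_{e\in E(H)}\sum_{\{u,v\}\subseteq e}(y_u-y_v)^2$ by pairs: a fixed pair $\{u,v\}$ contributes $(y_u-y_v)^2$ once for each edge containing both $u$ and $v$, and there are exactly $e_{uv}$ such edges, so this double sum equals $\sum_{\{u,v\}\subseteq V(H)}e_{uv}(y_u-y_v)^2$. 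Substituting yields the maximization formula for $\lambda_1(H)$ immediately.

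For $\lambda_{n-1}(H)$, the only additional work is translating the orthogonality constraint. Writing $\mathbf{x}=D(H)^{1/2}\mathbf{y}$ and using that $D(H)^{1/2}$ is diagonal, hence symmetric, the condition $\mathbf{x}^\top D(H)^{1/2}\mathbf{1}_n=0$ becomes $\mathbf{y}^\top D(H)^{1/2}D(H)^{1/2}\mathbf{1}_n=\mathbf{y}^\top D(H)\mathbf{1}_n=0$, that is $\mathbf{y}\bot D(H)\mathbf{1}_n$, which is exactly the constraint appearing in the statement. Combined with the reindexing above, this gives the minimization formula.

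The computation is essentially routine once the framework is in place, and the step deserving the most care is the change of variables. I would verify that $D(H)^{-1/2}$ is genuinely invertible --- this is precisely where the standing assumption that $H$ has no isolated vertices is used --- so that the extremum over $\mathbf{x}$ transfers faithfully to an extremum over $\mathbf{y}$. The accompanying subtlety is that the orthogonality constraint does not simply carry over unchanged: the extra factor $D(H)^{1/2}$ in the substitution converts $\mathbf{x}\bot D(H)^{1/2}\mathbf{1}_n$ into $\mathbf{y}\bot D(H)\mathbf{1}_n$, and keeping track of this weight correctly is the one place where an error could slip in.
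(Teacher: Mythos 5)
Your proposal is correct and follows essentially the same route as the paper: the Rayleigh--Ritz/Courant--Fischer characterizations combined with the substitution $\mathbf{y}=D(H)^{-1/2}\mathbf{x}$, the translation of the constraint $\mathbf{x}\bot D(H)^{1/2}\mathbf{1}_n$ into $\mathbf{y}\bot D(H)\mathbf{1}_n$, and the reindexing of $\sum_{e\in E(H)}\sum_{\{u,v\}\subseteq e}(y_u-y_v)^2$ by pairs using $k$-uniformity. Your extra remarks on the invertibility of $D(H)^{1/2}$ and on the validity of the min characterization when $0$ has higher multiplicity are both sound, though the paper leaves them implicit.
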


\begin{proof} Let $\mathbf{y}=D(H)^{-1/2}\mathbf{x}$ for $\mathbf{x}\in \mathbb{R}^n\setminus\{\mathbf{0}_n\}$. By the 
Rayleigh-Ritz theorem (see Theorem 4.2.2 in \cite{HJ}),
\[
\lambda_1(H)=\max_{\mathbf{x}\in \mathbb{R}^n\setminus\{\mathbf{0}_n\}}\frac{\mathbf{x}^\top \mathcal{L}(H)\mathbf{x}}{\mathbf{x}^\top\mathbf{x}}=\max_{\mathbf{y}\in \mathbb{R}^n\setminus\{\mathbf{0}_n\}}R_H(\mathbf{y}).
\]

By Theorem \ref{n0}, $0$ is the smallest normalized Laplacian eigenvalue of $H$
with a corresponding eigenvector $D(H)^{1/2}\mathbf{1}_n$.  Then $\mathbf{x}\bot D(H)^{1/2}\mathbf{1}_n$ if and only if $\mathbf{y}\bot D(H)\mathbf{1}_n$.
By the  Courant-Fischer theorem (see Theorem 4.2.11 in \cite{HJ}),
\[
\lambda_{n-1}(H)
=\min_{\mathbf{x}\in \mathbb{R}^n\setminus\{\mathbf{0}_n\} \atop\mathbf{x}\bot D(H)^{1/2}\mathbf{1}_n}\frac{\mathbf{x}^\top \mathcal{L}(H)\mathbf{x}}{\mathbf{x}^\top\mathbf{x}}=\min_{\mathbf{y}\in \mathbb{R}^n\setminus\{\mathbf{0}_n\}\atop\mathbf{y}\bot D(H)\mathbf{1}_n}R_H(\mathbf{y}).
\]

As $H$ is $k$-uniform, we have
\begin{align*}
\sum_{e\in E(H)}\frac{1}{|e|-1}\sum_{\{u,v\}\subseteq e}(y_u-y_v)^2
&=\frac{1}{2(k-1)}\sum_{e\in E(H)}\sum_{u\in e}\sum_{v\in e\setminus\{u\}}(y_u-y_v)^2\\
&=\frac{1}{2(k-1)}\sum_{u\in V(H)}\sum_{v\in V(H)\setminus\{u\}}\sum_{e\in E_{uv}}(y_u-y_v)^2\\
&=\frac{1}{k-1}\sum_{\{u,v\}\subseteq V(H)}e_{uv}(y_u-y_v)^2,
\end{align*}
so the result follows.
\end{proof}

\section{Bounds for the normalized Laplacian eigenvalues} \label{sec3}

To warm up, we give bounds from the largest and the second smallest normalized Laplacian eigenvalues of hypergraphs, extending known results from graphs to hypergraphs (see, e.g. \cite{chung}).

For a hypergraph $H$ and any subset $X\subset V(H)$, let $\overline{X}=V(H)\setminus X$.

\begin{theorem}\label{subset}
Let $H$ be  $k$-uniform hypergraph with $m$ edges. For any subset $\emptyset\ne X\subset V(H)$,
\[
\lambda_1(H)\ge \frac{|\partial X|km}{(k-1)\mathrm{vol}(X)(km-\mathrm{vol}(X))}.
\]
\end{theorem}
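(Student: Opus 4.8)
The plan is to apply the variational formula for $\lambda_1(H)$ from Lemma \ref{n-} to a single well-chosen test vector. Since that lemma characterizes $\lambda_1$ as $\frac{1}{k-1}$ times the \emph{maximum} of the weighted Rayleigh quotient $\frac{\sum_{\{u,v\}}e_{uv}(y_u-y_v)^2}{\sum_u d_uy_u^2}$ over all nonzero $\mathbf{y}$, evaluating that quotient at any one vector already yields a lower bound for $\lambda_1$. The whole argument reduces to picking $\mathbf{y}$ so that the quotient reproduces exactly the right-hand side.

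First I would record two bookkeeping identities that recast the statement in its natural normalized form. Because $H$ is $k$-uniform, $\mathrm{vol}(V(H))=\sum_u d_u=\sum_{e}|e|=km$, so $km=\mathrm{vol}(V(H))$ and $km-\mathrm{vol}(X)=\mathrm{vol}(\overline{X})$; thus the claimed inequality is precisely
\[
\lambda_1(H)\ge \frac{1}{k-1}\cdot\frac{|\partial X|\,\mathrm{vol}(V(H))}{\mathrm{vol}(X)\,\mathrm{vol}(\overline{X})}.
\]
Second, I would recall from the introduction that $|\partial X|=\sum_{u\in X}\sum_{v\in\overline{X}}e_{uv}$, i.e.\ the boundary size is exactly the total co-degree carried by unordered vertex pairs crossing the cut $(X,\overline{X})$.

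Then I would take the test vector $\mathbf{y}$ with $y_u=1/\mathrm{vol}(X)$ for $u\in X$ and $y_u=-1/\mathrm{vol}(\overline{X})$ for $u\in\overline{X}$, and substitute into the quotient of Lemma \ref{n-}. The point of this choice is that $y_u-y_v$ vanishes unless $\{u,v\}$ crosses the cut, in which case $y_u-y_v=\frac{1}{\mathrm{vol}(X)}+\frac{1}{\mathrm{vol}(\overline{X})}=\frac{\mathrm{vol}(V(H))}{\mathrm{vol}(X)\,\mathrm{vol}(\overline{X})}$, independently of the particular pair. Hence the numerator collapses to $\left(\frac{\mathrm{vol}(V(H))}{\mathrm{vol}(X)\,\mathrm{vol}(\overline{X})}\right)^2|\partial X|$, while the denominator telescopes to $\frac{\mathrm{vol}(X)}{\mathrm{vol}(X)^2}+\frac{\mathrm{vol}(\overline{X})}{\mathrm{vol}(\overline{X})^2}=\frac{\mathrm{vol}(V(H))}{\mathrm{vol}(X)\,\mathrm{vol}(\overline{X})}$. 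Dividing, the quotient equals $\frac{|\partial X|\,\mathrm{vol}(V(H))}{\mathrm{vol}(X)\,\mathrm{vol}(\overline{X})}$, and the factor $\frac{1}{k-1}$ supplies the target expression.

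There is essentially no hard step: this is a one-line test-vector substitution. The only places demanding care are the two combinatorial identities (that $\mathrm{vol}(V(H))=km$ and that the cross-cut co-degree sum equals $|\partial X|$ with its stated multiplicities), together with the observation that the common cut value $y_u-y_v$ factors cleanly out of the numerator. I would also remark that, since the quotient is computed to be \emph{equal} to $(k-1)$ times the right-hand side rather than merely bounded below by it, the inequality is attained exactly when this $\mathbf{y}$ is a maximizer of the Rayleigh quotient, which explains the stated tightness.
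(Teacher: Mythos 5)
Your proposal is correct and is essentially the paper's own proof: the test vector you choose is a scalar multiple of the one used in the paper (which takes $x_u=-\mathrm{vol}(\overline{X})$ on $X$ and $x_u=\mathrm{vol}(X)$ on $\overline{X}$), so the Rayleigh quotient computation via Lemma \ref{n-} is identical, and the final step $\mathrm{vol}(X)+\mathrm{vol}(\overline{X})=km$ is the same. No substantive difference.
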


\begin{proof}
Let $\mathbf{x}$ be an $n$-dimensional vector defined on $V(H)$ with
\[
x_u=\begin{cases}
\mathrm{vol}(X),& \mbox{if }u\notin X,\\
-\mathrm{vol}(\overline{X}),& \mbox{otherwise.}
\end{cases}
\]
By Lemma \ref{n-},
\begin{align*}
\lambda_1(H)&\ge \frac{\sum_{\{u,v\}\subseteq V(H)}e_{uv}(x_u-x_v)^2}{(k-1)\sum_{u\in V(H)}d_ux_u^2}\\
&=\frac{\sum_{u\in X}\sum_{v\in \overline{X}}e_{uv}(\mathrm{vol}(X)+\mathrm{vol}(\overline{X}))^2}{(k-1)\left(\sum_{u\in X}d_u\mathrm{vol}(\overline{X})^2+\sum_{u\in \overline{X}}d_u\mathrm{vol}(X)^2\right)}\\
&=\frac{|\partial X|(\mathrm{vol}(X)+\mathrm{vol}(\overline{X}))}{(k-1)\mathrm{vol}(X)\mathrm{vol}(\overline{X})}.
\end{align*}
As $H$ is $k$-uniform, we have $\mathrm{vol}(X)+\mathrm{vol}(\overline{X})=km$, so the result follows.
\end{proof}

In the case $k=2$, the result above may be found in \cite[Theorem 3.5]{LGS}.

\begin{corollary}\label{Delta}
Let $H$ be a $k$-uniform hypergraph on with $m$ edges and maximum degree $\Delta$.
Then
$\lambda_1(H)\ge \frac{km}{km-\Delta}$.
\end{corollary}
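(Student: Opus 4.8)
The plan is to derive this as an immediate consequence of Theorem~\ref{subset} by taking $X$ to be a single vertex of maximum degree. Let $v\in V(H)$ satisfy $d_v=\Delta$, and set $X=\{v\}$; this is a nonempty proper subset since $H$ has $m$ edges and each edge has $k\ge 2$ vertices, so $|V(H)|\ge 2$. With this choice, $\mathrm{vol}(X)=d_v=\Delta$, so the denominator factor $km-\mathrm{vol}(X)$ becomes $km-\Delta$, already matching the target bound.

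The one quantity that requires care is the size of the edge boundary $|\partial X|$. Using the identity $|\partial X|=\sum_{u\in X}\sum_{w\in\overline{X}}e_H(u,w)$ recorded in the definition, for the singleton $X=\{v\}$ this reduces to $\sum_{w\ne v}e_{vw}$. I would then rewrite this co-degree sum as a sum over the edges through $v$:
\[
\sum_{w\ne v}e_{vw}=\sum_{e\in E_v}\sum_{w\in e\setminus\{v\}}1=\sum_{e\in E_v}(|e|-1).
\]
Since $H$ is $k$-uniform, every such edge has $|e|=k$, whence $|\partial X|=(k-1)d_v=(k-1)\Delta$.

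Substituting $|\partial X|=(k-1)\Delta$ and $\mathrm{vol}(X)=\Delta$ into the bound of Theorem~\ref{subset} yields
\[
\lambda_1(H)\ge\frac{(k-1)\Delta\cdot km}{(k-1)\Delta\,(km-\Delta)}=\frac{km}{km-\Delta},
\]
the factors $(k-1)$ and $\Delta$ cancelling cleanly. There is no genuine obstacle here, as the statement is a direct specialization of the preceding theorem; the only point to get right is the boundary count, where one must recall that in a $k$-uniform hypergraph each edge through $v$ is split into the single vertex $v$ inside $X$ and $k-1$ vertices outside, so its multiplicity in $\partial X$ is $|e\cap X|\,|e\cap\overline{X}|=1\cdot(k-1)$.
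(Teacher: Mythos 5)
Your proposal is correct and follows exactly the paper's own argument: apply Theorem~\ref{subset} with $X$ a singleton of maximum degree, noting $\mathrm{vol}(X)=\Delta$ and $|\partial X|=(k-1)\Delta$. The only difference is that you spell out the boundary count $|\partial X|=\sum_{e\in E_v}(|e|-1)=(k-1)\Delta$ explicitly, which the paper leaves as an observation.
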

\begin{proof}
Let $u$ be a vertex of $H$ with maximum degree and $X=\{u\}$. Then the result follows from Theorem \ref{subset} by noting that $|\partial X|=(k-1)\Delta$ and $\mathrm{vol}(X)=\Delta$.
\end{proof}

For a hypergraph $H$, an independent set of $H$ is a subset of $V(H)$, in which any pair of vertices is non-adjacent.
The independence number of $H$, written as $\alpha(H)$,  is the maximum cardinality of all independent sets in $H$.
So, if $X$ is an independent set of a $k$-uniform hypergrph $H$, then $|\partial X|=(k-1)\mathrm{vol}(X)$ and hence the following result is an immediate consequence of Theorem \ref{subset}.

\begin{corollary}\label{inde}
Let $H$ be a $k$-uniform hypergraph with $m$ edges.
For any independent set $X$ of $H$, $\lambda_1(H)\ge \frac{km}{km-\mathrm{vol}(X)}$. Moreover, if the minimum degree of $H$ is $\delta$, then $\lambda_1(H)\ge \frac{km}{km-\alpha\delta}$, where $\alpha$ is the independence number of $H$.
\end{corollary}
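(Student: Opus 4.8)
The plan is to obtain both bounds directly from Theorem \ref{subset}, using the fact already anticipated in the surrounding text that an independent set has an edge boundary of a particularly simple size.

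First I would verify the identity $|\partial X| = (k-1)\mathrm{vol}(X)$ for an independent set $X$. Starting from $|\partial X| = \sum_{u\in X}\sum_{v\in\overline{X}} e_{uv}$, independence forces $e_{uv}=0$ whenever both $u,v\in X$, so for each $u\in X$ one has $\sum_{v\in\overline{X}}e_{uv} = \sum_{v\ne u}e_{uv}$. Rewriting this as a sum over edges gives $\sum_{v\ne u}e_{uv} = \sum_{e\in E_u}|e\setminus\{u\}| = (k-1)d_u$ since $H$ is $k$-uniform. Summing over $u\in X$ yields $|\partial X| = (k-1)\mathrm{vol}(X)$. Substituting into Theorem \ref{subset}, the common factor $(k-1)\mathrm{vol}(X)$ cancels:
\[
\lambda_1(H)\ge \frac{(k-1)\mathrm{vol}(X)\,km}{(k-1)\mathrm{vol}(X)\bigl(km-\mathrm{vol}(X)\bigr)} = \frac{km}{km-\mathrm{vol}(X)},
\]
which is the first assertion.

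For the second assertion I would take $X$ to be a maximum independent set, so that $|X|=\alpha$, and apply the first bound to this particular $X$. Since every vertex of $H$ has degree at least $\delta$, we have $\mathrm{vol}(X) = \sum_{u\in X}d_u \ge \alpha\delta$. Because $X$ is a proper subset of $V(H)$ and $H$ has no isolated vertices, $\mathrm{vol}(X) < \mathrm{vol}(V(H)) = km$, so the map $t\mapsto \frac{km}{km-t}$ is increasing on the relevant range; feeding $\mathrm{vol}(X)\ge\alpha\delta$ through it gives $\lambda_1(H)\ge \frac{km}{km-\mathrm{vol}(X)}\ge \frac{km}{km-\alpha\delta}$.

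Since the statement is flagged as an immediate consequence, there is no genuine obstacle; the only point meriting a moment of attention is the direction of the monotonicity in the final step, where one must confirm that enlarging the subtracted quantity $\mathrm{vol}(X)$ in the denominator strengthens rather than weakens the bound.
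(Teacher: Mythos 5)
Your proposal is correct and follows exactly the route the paper takes: it establishes $|\partial X|=(k-1)\mathrm{vol}(X)$ for an independent set (which the paper states just before the corollary) and then specializes Theorem \ref{subset}, with the second bound following from $\mathrm{vol}(X)\ge\alpha\delta$ and the monotonicity of $t\mapsto \frac{km}{km-t}$. Nothing further is needed.
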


\begin{theorem}\label{lambda1}
For a hypergraph $H$ on $n$ vertices, $\lambda_1(H)\le 2$ with equality if and only if $H$ has
a $2$-uniform bipartite component.
Moreover, if $H$ is $k$-uniform, then $\lambda_1(H)\le \frac{k}{k-1}$ and it is attained if $|E(H)|<n$.
\end{theorem}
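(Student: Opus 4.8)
The plan is to work entirely with the Rayleigh quotient $R_H(\mathbf{y})$ of \eqref{xz24}, using the identity $\lambda_1(H)=\max_{\mathbf{y}\in\mathbb{R}^n\setminus\{\mathbf{0}_n\}}R_H(\mathbf{y})$, which follows from the Rayleigh--Ritz theorem exactly as in the first line of the proof of Lemma \ref{n-} and requires no uniformity. Both upper bounds then reduce to a single pointwise inequality applied edge by edge, and both equality analyses reduce to tracking when that pointwise inequality is tight; the denominator $\sum_u d_u y_u^2$ is positive for every nonzero $\mathbf{y}$ since $H$ has no isolated vertices.

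For the bound $\lambda_1(H)\le 2$, first I would bound the numerator of $R_H(\mathbf{y})$ using $(y_u-y_v)^2\le 2(y_u^2+y_v^2)$ on each pair inside each edge. Since each vertex $u\in e$ occurs in exactly $|e|-1$ of the pairs $\{u,v\}\subseteq e$, one has $\frac{1}{|e|-1}\sum_{\{u,v\}\subseteq e}(y_u^2+y_v^2)=\sum_{u\in e}y_u^2$, and summing over edges gives $\sum_{e}\frac{1}{|e|-1}\sum_{\{u,v\}\subseteq e}(y_u-y_v)^2\le 2\sum_{u}d_uy_u^2$, i.e. $R_H(\mathbf{y})\le 2$. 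For the equality characterization I would take a maximizer $\mathbf{y}$ with $R_H(\mathbf{y})=2$; as the bound was applied termwise, equality forces $(y_u-y_v)^2=2(y_u^2+y_v^2)$, equivalently $y_u=-y_v$, for every adjacent pair. If some edge $e$ had $|e|\ge 3$, three of its vertices would force $y\equiv 0$ on $e$, so every edge meeting the support $\{u:y_u\neq 0\}$ has size $2$. Starting from a vertex with $y_u\neq 0$ and propagating $y_v=-y_u$ along edges, connectivity shows that the whole component $C$ of $u$ is $2$-uniform, that $\mathbf{y}$ is nonzero of constant modulus on $C$, and that the sign of $\mathbf{y}$ is a proper $2$-coloring, so $C$ is bipartite. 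Conversely, for a $2$-uniform bipartite component with parts $A,B$, the vector equal to $1$ on $A$, $-1$ on $B$, and $0$ elsewhere has numerator $4|E(C)|$ and denominator $\mathrm{vol}(C)=2|E(C)|$, hence $R_H(\mathbf{y})=2$ and $\lambda_1(H)=2$.

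For the $k$-uniform refinement I would instead use the identity $\sum_{\{u,v\}\subseteq e}(y_u-y_v)^2=k\sum_{u\in e}y_u^2-\big(\sum_{u\in e}y_u\big)^2\le k\sum_{u\in e}y_u^2$, which after dividing by $k-1$ and summing over edges gives $R_H(\mathbf{y})\le \frac{k}{k-1}$. When $m=|E(H)|<n$, the homogeneous system $\sum_{u\in e}y_u=0$ (one equation per $e\in E(H)$) has at most $m<n$ independent equations, hence a nonzero solution $\mathbf{y}$; for such $\mathbf{y}$ the squared-sum term vanishes on every edge, the inequality above becomes an equality, and $R_H(\mathbf{y})=\frac{k}{k-1}$, forcing $\lambda_1(H)=\frac{k}{k-1}$.

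I expect the main obstacle to be the equality analysis for $\lambda_1(H)=2$: the termwise condition $y_u=-y_v$ must be converted, via the size-$\ge 3$ obstruction together with a connectivity/propagation argument, into the structural statement that an entire component is $2$-uniform and bipartite. The two upper bounds and the attainment claim for $m<n$ are then routine, the latter being a dimension count for the linear system $\{\sum_{u\in e}y_u=0\}_{e\in E(H)}$.
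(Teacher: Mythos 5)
Your proposal is correct and follows essentially the same route as the paper: the Rayleigh quotient $R_H(\mathbf{y})$, an edge-by-edge upper bound, equality analysis forcing sign-alternation and vanishing on edges of size at least $3$, and for the $k$-uniform case the identity $\sum_{\{u,v\}\subseteq e}(y_u-y_v)^2=k\sum_{u\in e}y_u^2-\bigl(\sum_{u\in e}y_u\bigr)^2$ together with the dimension count for the system $\sum_{u\in e}y_u=0$. The only (harmless) variation is that for the bound $\lambda_1(H)\le 2$ you use the pairwise inequality $(y_u-y_v)^2\le 2(y_u^2+y_v^2)$, which hands you the equality condition $y_u=-y_v$ directly, whereas the paper derives the same conclusions from $\sum_{u\in e}y_u=0$ together with $|e|=2$ on the support.
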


\begin{proof}
Let $\mathbf{x}$ be an eigenvector corresponding to $\lambda_1(H)$ and $\mathbf{y}=D(H)^{-1/2}\mathbf{x}$.
Then
\[
\lambda_1(H)=\frac{\mathbf{x}^\top \mathcal{L}(H)\mathbf{x}}{\mathbf{x}^\top \mathbf{x}}=\frac{\sum_{e\in E(H)}\frac{1}{|e|-1}\sum_{\{u,v\}\subseteq e}(y_u-y_v)^2}{\sum_{u\in V(H)}d_uy_u^2}.
\]
Note that for each edge $e\in E(H)$, \[
\sum_{\{u,v\}\subseteq e}(y_u-y_v)^2=\sum_{\{u,v\}\subseteq e}(y_u^2+y_v^2-2y_uy_v).
\]
For $e\in E(H)$, let $y_e=\sum_{u\in e}y_u$. Then
\begin{align*}
\sum_{\{u,v\}\subseteq e}(y_u^2+y_v^2-2y_uy_v)
&=(|e|-1)\sum_{u\in e}y_u^2-2\sum_{\{u,v\}\subseteq e}y_uy_v\\
&=(|e|-1)\sum_{u\in e}y_u^2-\sum_{u\in e}y_u\sum_{v\in e,v\ne u}y_v\\
&=(|e|-1)\sum_{u\in e}y_u^2-\sum_{u\in e}y_u(y_e-y_u)\\
&=|e|\sum_{u\in e}y_u^2-y_e\sum_{u\in e}y_u\\
&=|e|\sum_{u\in e}y_u^2-y_e^2\\
&\le |e|\sum_{u\in e}y_u^2,
\end{align*}
so
\begin{equation}\label{e}
\sum_{e\in E(H)}\frac{1}{|e|-1}\sum_{\{u,v\}\subseteq e}(y_u-y_v)^2\le \sum_{e\in E(H)}\frac{|e|}{|e|-1}\sum_{u\in e}y_u^2.
\end{equation}
As $|e|\ge 2$ for each edge $e\in E(H)$, we have
\begin{align*}
\sum_{e\in E(H)}\frac{|e|}{|e|-1}\sum_{u\in e}y_u^2
&\le 2\sum_{e\in E(H)}\sum_{u\in e}y_u^2\\
&=2\sum_{u\in V(H)}\sum_{e\in E_u}y_u^2\\
&=2\sum_{u\in V(H)}d_uy_u^2,
\end{align*}
so $\lambda_1(H)\le 2$.

Suppose that  $\lambda_1(H)=2$. Then by the above argument, we have
\begin{equation} \label{BP}
\sum_{u\in e}y_u=0
\end{equation}
and
\begin{equation} \label{BP2}
\sum_{u\in e}y_u^2\ne 0 \Rightarrow |e|=2
\end{equation}
for any $e\in E(H)$.

From \eqref{BP} and \eqref{BP2}, $y_u=0$ for any vertex $u$ in some edge containing at least three vertices. So, if a component of $H$ has an edge containing at least three vertices, then $y_u=0$ for any vertex of this component. Note that  $\mathbf{y}$ is nonzero. So there is at least a component $F$ that is
a $2$-uniform hypergraph, that is, an ordinary graph. If $F$ is not bipartite, then it has at least one odd cycle and hence from \eqref{BP},  $y_u=0$ for each vertex $u$ on this cycle, implying that $y_u=0$ for each $u\in V(F)$ by the connectivity of $F$. It thus follows that $F$ is bipartite.

Conversely, suppose that  one component $F$ of $H$ is an ordinary bipartite graph. Then it is easily seen that $\lambda_1(F)=2$. By the above argument, $\lambda_1(H)\le 2$. Evidently,
$\lambda_1(H)$ is the maximum of all the largest normalized Laplacian eigenvalues of the components of $H$. So $\lambda_1(H)=2$

If $H$ is $k$-uniform, then $|e|=k$ for each $e\in E(H)$ and so from \eqref{e} we have
\[
\sum_{e\in E(H)}\frac{1}{|e|-1}\sum_{\{u,v\}\subseteq e}(y_u-y_v)^2\le \frac{k}{k-1}\sum_{e\in E(H)}\sum_{u\in e}y_u^2=\frac{k}{k-1}\sum_{u\in V(H)}d_uy_u^2.
\]
Therefore, $\lambda_1(H)\le \frac{k}{k-1}$ with equality if and only if there is a nonzero vector
$\mathbf{y}$
such that  $\sum_{u\in e}y_u=0$ for any $e\in E(H)$, equivalently,
the homogeneous system of $|E(H)|$ linear equations $\sum_{u\in e}y_u=0$ with  $e\in E(H)$
in $n$ unknowns $y_{u}$ with $u\in V(G)$ has a nonzero solution,
which is definitely guaranteed  if $|E(H)|<n$.
\end{proof}

It was shown in \cite{chung} that for any non-complete graph $G$, $\lambda_{n-1}(G)\le 1$.
The case for hypergraphs is somewhat different.
For example, let $R_{4,3}$ be a $4$-vertex $3$-uniform hypergraph with vertex set $V(R_{4,3})=\{v_1,v_2,v_3,v_4\}$ and edge set  $E(R_{4,3})=\{\{v_1,v_2,v_3\},\{v_1,v_2,v_4\},\{v_1,v_3,v_4\} \}$. This is not the complete
$4$-vertex $3$-uniform hypergraph, though any two vertices are adjacent in $R_{4,3}$.
It is easy to see that
\[
\mathcal{L}(R_{4,3})=\begin{pmatrix}
1&-\frac{1}{\sqrt{6}}&-\frac{1}{\sqrt{6}}&-\frac{1}{\sqrt{6}}\\
-\frac{1}{\sqrt{6}}&1&-\frac{1}{4}&-\frac{1}{4}\\
-\frac{1}{\sqrt{6}}&-\frac{1}{4}&1&-\frac{1}{4}\\
-\frac{1}{\sqrt{6}}&-\frac{1}{4}&-\frac{1}{4}&1
\end{pmatrix}.
\]
By a direct calculation,
$\lambda_3(R_{4,3})=\frac{5}{4}>1$.

\begin{theorem}\label{1}
For an $n$-vertex hypergraph $H$ with at least two non-adjacent vertices, $\lambda_{n-1}(H)\le 1$.
\end{theorem}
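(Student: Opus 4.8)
The plan is to exhibit a single well-chosen test vector and apply the variational (min–max) characterization of $\lambda_{n-1}(H)$. The characterization
\[
\lambda_{n-1}(H)=\min_{\mathbf{y}\in\mathbb{R}^n\setminus\{\mathbf{0}_n\},\ \mathbf{y}\bot D(H)\mathbf{1}_n} R_H(\mathbf{y})
\]
is exactly what is derived in the first part of the proof of Lemma \ref{n-}: it uses only Theorem \ref{n0}, the substitution $\mathbf{y}=D(H)^{-1/2}\mathbf{x}$, and the Courant–Fischer theorem, and so it is valid for an arbitrary hypergraph $H$, not merely a $k$-uniform one (the $k$-uniform hypothesis is invoked later in that proof only to rewrite the numerator through the co-degrees $e_{uv}$). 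Consequently it suffices to produce one nonzero vector $\mathbf{y}$ with $\mathbf{y}\bot D(H)\mathbf{1}_n$ and $R_H(\mathbf{y})\le 1$.

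First I would fix two non-adjacent vertices $u,v$ (they exist by hypothesis) and define $\mathbf{y}$ supported on $\{u,v\}$ by $y_u=d_v$, $y_v=-d_u$, and $y_w=0$ for every $w\in V(H)\setminus\{u,v\}$. Since $H$ has no isolated vertices, $d_u,d_v>0$, so $\mathbf{y}\ne\mathbf{0}_n$; moreover $\sum_{w\in V(H)}d_w y_w=d_u d_v-d_v d_u=0$, i.e.\ $\mathbf{y}\bot D(H)\mathbf{1}_n$, so $\mathbf{y}$ is admissible. The denominator of $R_H(\mathbf{y})$ is then
\[
\sum_{w\in V(H)}d_w y_w^2=d_u d_v^2+d_v d_u^2=d_u d_v(d_u+d_v).
\]

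For the numerator I would exploit the non-adjacency of $u$ and $v$, which is the crux of the argument: since no edge contains both $u$ and $v$, every edge meets $\{u,v\}$ in at most one vertex. An edge $e$ through $u$ (hence not through $v$) contributes $\frac{1}{|e|-1}\sum_{\{p,q\}\subseteq e}(y_p-y_q)^2=\frac{1}{|e|-1}\cdot(|e|-1)\,y_u^2=d_v^2$, because only the $|e|-1$ pairs $\{u,w\}$ give a nonzero term and each equals $y_u^2=d_v^2$; summing over the $d_u$ edges through $u$ yields $d_u d_v^2$. By the symmetric computation the edges through $v$ contribute $d_v d_u^2$, while edges avoiding both $u$ and $v$ contribute $0$. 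Hence the numerator equals $d_u d_v^2+d_v d_u^2=d_u d_v(d_u+d_v)$, which is identical to the denominator, so $R_H(\mathbf{y})=1$ and therefore $\lambda_{n-1}(H)\le 1$.

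There is no serious obstacle in this argument; the only point requiring care is the bookkeeping of the numerator, and the non-adjacency hypothesis is precisely what makes it collapse — it kills every cross term between $u$ and $v$ and forces each contributing edge to behave as though it carried a single incident pair. This is also what distinguishes the hypergraph situation from the example $R_{4,3}$ preceding the statement, where every pair of vertices is adjacent and no such test vector is available.
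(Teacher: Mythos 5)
Your proposal is correct and is essentially identical to the paper's own proof: the paper uses the very same test vector (value $d_z$ at one non-adjacent vertex $w$, $-d_w$ at the other, zero elsewhere), verifies orthogonality to $D(H)\mathbf{1}_n$, and computes both the numerator and denominator of the Rayleigh quotient to be $d_zd_w(d_z+d_w)$. Your observation that the relevant part of Lemma~\ref{n-} does not need uniformity is also exactly how the paper applies it.
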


\begin{proof}
Denote by $z$ and $w$ the two non-adjacent vertices in $H$.
Let $\mathbf{x}$ be the vector with
\[
x_u=\begin{cases}
d_z,  & \mbox{if }u=w,\\
-d_w,  & \mbox{if }u=z,\\
0,   & \mbox{otherwise.}
\end{cases}
\]
As
\[
\mathbf{x}^\top D(H)\mathbf{1}_n=\sum_{u\in V(H)}x_ud_u=d_wd_z-d_zd_w=0,
\]
we have by Lemma \ref{n-} that
\begin{align*}
\lambda_{n-1}(H)&=\min_{\mathbf{x}\in \mathbb{R}^n\setminus\{\mathbf{0}_n\}\atop
\mathbf{x}\bot D(H)\mathbf{1}_n} R_H(\mathbf{x})\\
&\le \frac{\sum_{e\in E(H)}\frac{1}{|e|-1}\sum_{\{u,v\}\subseteq e}(x_u-x_v)^2}{\sum_{u\in V(H)}d_ux_u^2}.
\end{align*}
Note that
\begin{align*}
&\quad \sum_{e\in E(H)}\frac{1}{|e|-1}\sum_{\{u,v\}\subseteq e}(x_u-x_v)^2\\
&=\sum_{e\in E(H)}\frac{1}{|e|-1}\sum_{\{u,w\}\subseteq e}d_z^2+\sum_{e\in E(H)}\frac{1}{|e|-1}\sum_{\{u,z\}\subseteq e}d_w^2\\
&=d_z^2\sum_{e\in E_w}\sum_{u\in e\setminus\{w\}}\frac{1}{|e|-1}+d_w^2\sum_{e\in E_z}
\sum_{u\in e\setminus \{z\}}\frac{1}{|e|-1}\\
&=d_z^2d_w+d_w^2d_z
\end{align*}
and
\[
\sum_{u\in V(H)}d_ux_u^2=d_wd_z^2+d_zd_w^2.
\]
So
\[
\lambda_{n-1}(H)\le \frac{d_z^2d_w+d_w^2d_z}{d_wd_z^2+d_zd_w^2}=1. \qedhere
\]
\end{proof}

Let $H$ be a connected hypergraph.
The distance between vertices $u$ and $v$ in $H$ is the length (number of edges) of a shortest path connecting them, and the diameter of $H$ is the maximum distance between  any pair of vertices in $H$.

\begin{theorem}
Let $H$ be an $n$-vertex connected $k$-uniform hypergraph with diameter $d$. Then $\lambda_{n-1}(H)>\frac{2}{dk(k-1)^2\mathrm{vol}(V(H))}$.
\end{theorem}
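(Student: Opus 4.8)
The plan is to work directly with the variational formula from Lemma \ref{n-}. Let $\mathbf{y}\ne \mathbf{0}_n$ be an eigenvector attaining the minimum there, so that $\mathbf{y}\bot D(H)\mathbf{1}_n$, i.e. $\sum_{u\in V(H)}d_uy_u=0$, and
\[
\lambda_{n-1}(H)=\frac{1}{k-1}\cdot\frac{\sum_{\{u,v\}\subseteq V(H)}e_{uv}(y_u-y_v)^2}{\sum_{u\in V(H)}d_uy_u^2}.
\]
Because every $d_u>0$ while the weighted sum $\sum_u d_uy_u$ vanishes and $\mathbf{y}\ne\mathbf{0}_n$, the vector $\mathbf{y}$ must have entries of both signs. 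Replacing $\mathbf{y}$ by $-\mathbf{y}$ if necessary, I may assume the entry of largest absolute value is positive, say $y_p=M:=\max_u|y_u|>0$, and I fix a vertex $q$ with $y_q<0$.

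For the denominator I use the crude bound $\sum_{u}d_uy_u^2\le M^2\sum_u d_u=M^2\,\mathrm{vol}(V(H))$. The real work, and the step I expect to be the main obstacle, is the lower bound on the numerator. I would take a shortest path $p=w_0,e_1,w_1,\dots,e_\ell,w_\ell=q$ from $p$ to $q$, which exists by connectivity and has length $\ell\le d$ with pairwise distinct vertices $w_0,\dots,w_\ell$. Each consecutive pair satisfies $\{w_{i-1},w_i\}\subseteq e_i$, so $e_{w_{i-1}w_i}\ge 1$, and since the $w_i$ are distinct these unordered pairs are distinct. Restricting the global numerator sum to just these path pairs and then applying the Cauchy--Schwarz (telescoping) inequality gives
\[
\sum_{\{u,v\}\subseteq V(H)}e_{uv}(y_u-y_v)^2\ge \sum_{i=1}^{\ell}(y_{w_{i-1}}-y_{w_i})^2\ge \frac{1}{\ell}\left(\sum_{i=1}^{\ell}(y_{w_{i-1}}-y_{w_i})\right)^2=\frac{(y_p-y_q)^2}{\ell}\ge\frac{(y_p-y_q)^2}{d}.
\]
The subtlety here is precisely that one must verify the pairs along a shortest path are distinct (so that they may be summed independently inside the numerator) and that telescoping recovers the full difference $y_p-y_q$ from a single path, with no other part of the global sum needed.

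Finally, since $y_q<0<y_p=M$ we have $y_p-y_q>M$, so $(y_p-y_q)^2>M^2$ and the numerator strictly exceeds $M^2/d$. Combining with the denominator bound yields
\[
\lambda_{n-1}(H)>\frac{1}{k-1}\cdot\frac{M^2/d}{M^2\,\mathrm{vol}(V(H))}=\frac{1}{(k-1)\,d\,\mathrm{vol}(V(H))}.
\]
It then remains only to observe that this already dominates the claimed estimate: since $k(k-1)\ge 2$ for every $k\ge 2$, one has $\frac{1}{k-1}\ge\frac{2}{k(k-1)^2}$, and hence $\lambda_{n-1}(H)>\frac{2}{dk(k-1)^2\,\mathrm{vol}(V(H))}$, which is the desired strict inequality. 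I note that a slightly more wasteful estimate (for instance keeping the edge-based Rayleigh quotient with its factor $\frac{1}{k-1}$ per edge and bounding the number of pairs $\binom{k}{2}$ contributed by each edge) would land exactly on the stated constant, so the argument is robust to how one organizes the numerator bound.
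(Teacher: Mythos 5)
Your proof is correct, and it follows the same skeleton as the paper's argument: take a minimizer $\mathbf{y}$ of the Rayleigh quotient from Lemma \ref{n-}, use the orthogonality $\sum_u d_u y_u=0$ to find entries of both signs, bound the denominator crudely by $M^2\,\mathrm{vol}(V(H))$, and lower-bound the numerator by telescoping along a shortest path from the vertex of maximum modulus to a vertex of opposite sign, with the strictness coming from $y_p-y_q>y_p$. The one genuine difference is in how the per-edge contribution to the numerator is extracted. The paper works edge by edge: for each path edge $e$ it applies Cauchy--Schwarz over all $\binom{k}{2}$ pairs inside $e$ to get $\sum_{\{u,v\}\subseteq e}(x_u-x_v)^2\ge\frac{2}{k(k-1)}(a_e-b_e)^2$, which is exactly where the factor $\frac{2}{k(k-1)}$ in the stated constant is lost. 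You instead keep the co-degree form $\sum_{\{u,v\}}e_{uv}(y_u-y_v)^2$ and discard everything except the $\ell$ distinct consecutive pairs $\{w_{i-1},w_i\}$ of the path, each of which carries $e_{w_{i-1}w_i}\ge 1$; this needs only one application of Cauchy--Schwarz and yields the stronger bound $\lambda_{n-1}(H)>\frac{1}{(k-1)\,d\,\mathrm{vol}(V(H))}$, which dominates the claimed $\frac{2}{dk(k-1)^2\mathrm{vol}(V(H))}$ since $k(k-1)\ge 2$. So your argument is not only valid but sharper by a factor of $\frac{k(k-1)}{2}$; the only point you rightly flag, and correctly handle, is that the path pairs are distinct unordered pairs (guaranteed by the distinctness of vertices in a path), so they may be isolated inside the global sum.
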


\begin{proof} By Lemma \ref{n-}, there is a vector
$\mathbf{x}$  with
\[
\lambda_{n-1}(H)=\frac{\sum_{e\in E(H)}\sum_{\{u,v\}\subseteq e}(x_u-x_v)^2}{(k-1)\sum_{u\in V(H)}d_ux_u^2}.
\]
Let $w\in V(H)$ with $|x_w|=\max\{|x_u|:u\in V(H)\}$.
Assume that $x_w>0$.
As $\sum_{u\in V(H)}d_ux_u=0$, there exists a vertex $z\in V(H)$ with $x_z<0$.
Let $P$ be the path from $w$ to $z$. Then $|E(P)|\le d$.
For any edge $e\in E(H)$, let $a_{e}=\max\{x_u:u\in e\}$ and $b_{e}=\min\{x_u:u\in e\}$. By the Cauchy-Schwarz inequality,
\begin{align*}
\sum_{\{u,v\}\subseteq e}(x_u-x_v)^2\ge \frac{2}{k(k-1)}\left( \sum_{\{u,v\}\subseteq e}|x_u-x_v|\right)^2\ge \frac{2}{k(k-1)}(a_{e}-b_{e})^2.
\end{align*}
Then
\begin{align*}
\sum_{e\in E(P)}\sum_{\{u,v\}\subseteq e}(x_u-x_v)^2&\ge\frac{2}{k(k-1)}
\sum_{e\in E(P)}(a_{e}-b_{e})^2\\
&\ge \frac{2}{|E(P)|k(k-1)}\left(\sum_{e\in E(P)}|a_{e}-b_{e}| \right)^2\\
&\ge \frac{2}{dk(k-1)}\left(\sum_{e\in E(P)}|a_{e}-b_{e}| \right)^2\\
&\ge \frac{2}{dk(k-1)}\left(x_w-x_z \right)^2\\
&> \frac{2}{dk(k-1)}x_w^2.
\end{align*}
Therefore,
\begin{align*}
\lambda_{n-1}(H)&=\frac{\sum_{e\in E(H)}\sum_{\{u,v\}\subseteq e}(x_u-x_v)^2}{(k-1)\sum_{u\in V(H)}d_ux_u^2}\\
&\ge \frac{\sum_{e\in E(P)}\sum_{\{u,v\}\subseteq e}(x_u-x_v)^2}{(k-1)\sum_{u\in V(H)}d_ux_u^2}\\
&>\frac{2x_w^2}{dk(k-1)^2\sum_{u\in V(H)}d_ux_u^2}\\
&\ge \frac{2x_w^2}{dk(k-1)^2\sum_{u\in V(H)}d_ux_w^2}\\
&=\frac{2}{dk(k-1)^2\mathrm{vol}(V(H))},
\end{align*}
as desired.
\end{proof}

\section{Interlacing theorem} \label{sec4}

Chen et al. \cite{chen}
provided an edge version of Cauchy's interlacing theorem on the normalized Laplacian eigenvalues for an ordinary graph $G$ with $e\in E(G)$,
\[
\lambda_{i+1}(G)\le \lambda_i(G-e)\le\lambda_{i-1}(G) \mbox{ for each $i=1,\dots,n$}
\]
with $\lambda_0(G)=2$ and $\lambda_{n+1}(G)=0$.
Let $K_4^3$ be the complete  $4$-vertex $3$-uniform hypergraph.
For the hypergraph $R_{4,3}$ defined as above, it is obvious that $R_{4,3}=K_4^3-e$ with edge $e=\{v_2,v_3,v_4\}$.
By a direct calculation, $\lambda_1(K_4^3)=\lambda_2(K_4^3)=\lambda_3(K_4^3)=\frac{4}{3}$
and $\lambda_2(R_{4,3})=\lambda_3(R_{4,3})=\frac{5}{4}$. So the above interlacing inequalities do not apply to hypergraphs directly.

Let $H$ be an $n$-vertex hypergraph which may contain isolated vertices. Let $W$ be the set of isolated vertices in $H$.
Let  $M=\mbox{diag}\{m(u):u\in V(H)\}$, where
\[
m(u)=\begin{cases}
1, & \mbox{if }u\in W,\\
d_H(u), & \mbox{otherwise}.
\end{cases}
\]
Let  $i=1,\dots,n-1$.
For vectors $\mathbf{x}$ and $\mathbf{x}^{(j)}\in \mathbb{R}^n$ with $j=1,\dots,n-i$, let
\[
\mathbf{y}=M^{-1/2}\mathbf{x}\mbox{ and } \mathbf{y}^{(j)}=M^{1/2}\mathbf{x}^{(j)}.
\]
Then
\[
\mathbf{y}^\top\mathbf{y}^{(j)}=(M^{-1/2}\mathbf{x})^\top (M^{1/2}\mathbf{x}^{(j)})=\mathbf{x}^\top M^{-1/2}M^{1/2}\mathbf{x}^{(j)}=\mathbf{x}^\top\mathbf{x}^{(j)},
\]
so  $\mathbf{x}\bot\mathbf{x}^{(j)}$ if and only if $\mathbf{y}\bot \mathbf{y}^{(j)}$.
Note that $M^{1/2}D(H)^{-1/2}$ is a diagnol matrix, where the entry corresponding to $u$ is $0$ if $u\in W$ and $1$ otherwise.
As the entry in row and column corresponding to each isolated vertex  in $L(H)$ is zero, we have
\begin{align} \label{add}
\frac{\mathbf{x}^\top \mathcal{L}(H)\mathbf{x}}{\mathbf{x}^\top \mathbf{x}}&=\frac{\mathbf{y}^\top M^{1/2} D(H)^{-1/2}L(H)D^{-1/2}M^{1/2}\mathbf{y}}{(M^{1/2}\mathbf{y})^\top (M^{1/2}\mathbf{y})} \notag\\
&=\frac{\sum_{e\in E(H)}\frac{1}{|e|-1}\sum_{\{u,v\}\subseteq e}(y_u-y_v)^2}{\sum_{u\in \overline{W}}d_uy_u^2+\sum_{u\in W}y_u^2}.
\end{align}
Denote by $M_H(\mathbf{y})$ the expression in the right side of \eqref{add}.
Particularly, if $W=\emptyset$, then $M_H(\mathbf{y})=R_H(\mathbf{y})$.
The well known Courant-Fischer theorem (see Theorem 4.2.11 in \cite{HJ}) may be stated as
\begin{align*}
\lambda_i(H)&=\max_{\mathbf{x}^{(1)},\dots,\mathbf{x}^{(n-i)}\in\mathbb{R}^n}
\min_{\mathbf{x}\in \mathbb{R}^n\setminus\{\mathbf{0}_n\} \atop \mathbf{x}\bot\mathbf{x}^{(1)},\dots,\mathbf{x}^{(n-i)}}\frac{\mathbf{x}^\top \mathcal{L}(H)\mathbf{x}}{\mathbf{x}^\top\mathbf{x}}\\
&=\max_{\mathbf{x}^{(1)},\dots,\mathbf{x}^{(n-i)}\in\mathbb{R}^n}
\min_{\mathbf{x}\in \mathbb{R}^n\setminus\{\mathbf{0}_n\} \atop \mathbf{y}\bot\mathbf{y}^{(1)},\dots,\mathbf{y}^{(n-i)}}M_H(\mathbf{y})\\
&=\max_{\mathbf{y}^{{(1)}},\dots,\mathbf{y}^{(n-i)}\in\mathbb{R}^n}
\min_{\mathbf{y}\in \mathbb{R}^n\setminus\{\mathbf{0}_n\} \atop\mathbf{y}\bot\mathbf{y}^{(1)},\dots,\mathbf{y}^{(n-i)}}M_H(\mathbf{y}).
\end{align*}
Similarly,
\begin{align*}
\lambda_i(H)
=\min_{\mathbf{y}^{(1)},\dots,\mathbf{y}^{(i-1)}\in \mathbb{R}^n}\max_{\mathbf{y}\in \mathbb{R}^n\setminus\{\mathbf{0}_n\}\atop \mathbf{y}\bot \mathbf{y}^{(1)},\dots,\mathbf{y}^{(i-1)}}M_H(\mathbf{y}).
\end{align*}

For $i=1,\dots,n$,
$\mathbf{e}_i$ denotes the $n$-dimensional column vector with its $i$-th entry $e_i$ to be $1$ and any other entry $0$. For a hypergraph $H$ with $e\in H$, as usual, $H-e$ denotes the subhypergraph obtained from $H$ by removing the edge $e$. Here we note that $H-e$ (and even $H$) may have isolated vertices in the following theorem.

\begin{theorem}\label{interlacing}
Let $H$ be an $n$-vertex $k$-uniform hypergraph and $e\in E(H)$. Then each $i=1, \dots, n$,
\[
\lambda_{i+k-1}(H)\le \lambda_i(H-e)\le \lambda_{i-k+1}(H)
\]
with $\lambda_j(H)=\frac{k}{k-1}$ if $j< 1$ and $\lambda_{j}(H)=0$ if $j>n$.
\end{theorem}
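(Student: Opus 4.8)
The plan is to work entirely with the generalized Rayleigh quotient $M_H(\mathbf y)$ and the Courant--Fischer formulas recorded just above the statement, comparing $M_H$ and $M_{H-e}$ directly rather than the matrices. Write $e=\{w_1,\dots,w_k\}$. Since the numerator of $M_H(\mathbf y)$ is a sum over edges, removing $e$ changes it by exactly $q_e(\mathbf y):=\frac{1}{k-1}\sum_{\{u,v\}\subseteq e}(y_u-y_v)^2\ge 0$; and since the degree of each $u\in e$ drops by one while an isolated vertex is assigned mass $1$, the denominator $\mathbf y^\top M_H\mathbf y$ exceeds $\mathbf y^\top M_{H-e}\mathbf y$ by exactly $r_e(\mathbf y):=\sum_{u\in e,\,d_H(u)\ge 2}y_u^2\ge 0$. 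Thus $M_H(\mathbf y)=\dfrac{\mathrm{num}_{H-e}(\mathbf y)+q_e(\mathbf y)}{\mathrm{den}_{H-e}(\mathbf y)+r_e(\mathbf y)}$, where $\mathrm{num}_{H-e},\mathrm{den}_{H-e}$ are the numerator and denominator of $M_{H-e}(\mathbf y)$. The proof rests on one structural fact: the form $q_e$ has rank $k-1$, its kernel being the codimension-$(k-1)$ subspace $\mathcal Z=\{\mathbf y:y_{w_1}=\dots=y_{w_k}\}$, and on $\mathcal Z$ one has $M_H(\mathbf y)\le M_{H-e}(\mathbf y)$ (same numerator, no smaller denominator). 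This codimension $k-1$ is the source of the shift $k-1$.

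For the left inequality $\lambda_{i+k-1}(H)\le\lambda_i(H-e)$ I would use the max--min form $\lambda_{i+k-1}(H)=\max_{\dim\mathcal W=i+k-1}\min_{\mathbf 0\ne\mathbf y\in\mathcal W}M_H(\mathbf y)$. For any $(i+k-1)$-dimensional $\mathcal W$ the intersection $\mathcal W\cap\mathcal Z$ has dimension at least $i$; choosing an $i$-dimensional $\mathcal U\subseteq\mathcal W\cap\mathcal Z$ and using $M_H\le M_{H-e}$ on $\mathcal Z$ gives $\min_{\mathcal W}M_H\le\min_{\mathcal U}M_H\le\min_{\mathcal U}M_{H-e}\le\lambda_i(H-e)$, and taking the maximum over $\mathcal W$ yields the claim. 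The boundary convention $\lambda_j(H)=0$ for $j>n$ is exactly what makes this hold when $i+k-1>n$, since $\lambda_i(H-e)\ge 0$.

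The right inequality $\lambda_i(H-e)\le\lambda_{i-k+1}(H)$ is the main obstacle, because there is no fixed subspace on which $M_{H-e}\le M_H$ holds unconditionally: the denominator drop $r_e$ pushes the quotient up, and whether this beats the numerator drop $q_e$ depends on the vector. To control this I would not compare the two quotients on a single subspace, but take $\mathcal W$ to be the $i$-dimensional subspace furnished by the spectral theorem on which $M_{H-e}(\mathbf y)\ge\lambda_i(H-e)$ (the span of the top $i$ generalized eigenvectors of $\mathcal L(H-e)$), and intersect it with the single hyperplane $\{\mathbf y:\sum_{u\in e}y_u=0\}$. On this hyperplane a direct computation gives $q_e(\mathbf y)=\frac{k}{k-1}\sum_{u\in e}y_u^2\ge\frac{k}{k-1}\,r_e(\mathbf y)$, and since $\lambda_i(H-e)\le\lambda_1(H-e)\le\frac{k}{k-1}$ by Theorem \ref{lambda1}, we obtain $q_e(\mathbf y)\ge\lambda_i(H-e)\,r_e(\mathbf y)$; adding this to $\mathrm{num}_{H-e}\ge\lambda_i(H-e)\,\mathrm{den}_{H-e}$ gives $M_H(\mathbf y)\ge\lambda_i(H-e)$ there. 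The intersection has dimension at least $i-1$, so the max--min formula yields $\lambda_{i-1}(H)\ge\lambda_i(H-e)$; as the $\lambda_j(H)$ are nonincreasing and $i-k+1\le i-1$, the stated bound $\lambda_{i-k+1}(H)\ge\lambda_i(H-e)$ follows (the argument in fact gives the sharper shift $1$ on this side). The boundary convention $\lambda_j(H)=\frac{k}{k-1}$ for $j<1$ again matches, via the same estimate $\lambda_i(H-e)\le\frac{k}{k-1}$.

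The step I expect to be most delicate is the denominator bookkeeping: verifying that removing $e$ alters $\mathbf y^\top M\mathbf y$ by precisely $\sum_{u\in e,\,d_H(u)\ge 2}y_u^2$, correctly handling the cases where a vertex of $e$ becomes isolated (so its mass is reset to $1$ rather than to $d_H(u)-1=0$) and where $H$ already carries isolated vertices, so that the $M$-matrix formalism of this section is applied consistently to both $H$ and $H-e$.
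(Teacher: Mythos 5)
Your argument is correct, and its overall framework --- the regularized quotient $M_H$ together with Courant--Fischer on subspaces --- is the same one the paper uses; in particular your denominator bookkeeping $\mathbf y^\top M_H\mathbf y-\mathbf y^\top M_{H-e}\mathbf y=\sum_{u\in e,\,d_H(u)\ge2}y_u^2$ is exactly right, including the reset-to-$1$ convention for vertices of $e$ that become isolated. Your left inequality coincides with the paper's Claim~2 (the codimension-$(k-1)$ subspace $y_{w_1}=\dots=y_{w_k}$, on which the numerators agree and the denominator can only shrink). Where you genuinely diverge is the right inequality. The paper imposes the $k-1$ constraints $\mathbf z\perp(\mathbf e_1+(k-1)\mathbf e_\ell)$, $\ell=2,\dots,k$, which pin the restriction of $\mathbf z$ to $e$ to the pattern $(-(k-1)a,a,\dots,a)$, computes $q_e=k^2a^2$ and $r_e\le k(k-1)a^2$ explicitly, and compares the two quotients via $\mathrm{num}/\mathrm{den}\le k/(k-1)=k^2a^2/\bigl(k(k-1)a^2\bigr)$; this costs $k-1$ dimensions and yields the stated shift $k-1$. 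You impose only the single constraint $\sum_{u\in e}y_u=0$, on which $q_e=\frac{k}{k-1}\sum_{u\in e}y_u^2\ge\frac{k}{k-1}r_e\ge\lambda_i(H-e)\,r_e$, and add this to $\mathrm{num}_{H-e}\ge\lambda_i(H-e)\,\mathrm{den}_{H-e}$ on the top $i$-dimensional eigenspace of $H-e$; this costs only one dimension and proves the strictly sharper bound $\lambda_i(H-e)\le\lambda_{i-1}(H)$, which implies the stated $\lambda_i(H-e)\le\lambda_{i-k+1}(H)$. I verified the sharper bound on $K_4^3$, $K_5^3$ and some sparse examples; it is tight, e.g.\ $\lambda_2(K_5^3-e)=\lambda_1(K_5^3)=\frac54$, so your shift of $1$ on that side is best possible. (In fact the paper's own pointwise comparison $M_{H-e}(\mathbf z)\le M_H(\mathbf z)$ already goes through on the whole hyperplane $\sum_{u\in e}z_u=0$, so its Claim~1 admits the same sharpening.) If you write this up, state explicitly that no analogous improvement is possible on the left: the paper's example $\lambda_3(K_4^3)=\frac43>\frac54=\lambda_2(R_{4,3})$ shows the left-hand shift cannot be reduced below $k-1$, which is why your two halves are treated asymmetrically.
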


\begin{proof} By Theorems \ref{n0} and \ref{lambda1}, $\lambda_n(H)=0$ and $\lambda_1(H)\le \frac{k}{k-1}$.

If $H$ contains exactly one edge $e$, the result is trivial.
Suppose that $H$ contains at least two edges.	
Let $e=\{v_1,\dots,v_k\}$ with $d_{H}(v_1)\ge 2$.
Let
\[
W=\{v: v\in e \mbox{ and } v\not\in e' \mbox{ for any } e'\in E(H)\setminus \{e\}\}.
\]
That is, $W$ is the set of isolated vertices in $H':=H-e$ and $t=|W|$.

\noindent
{\bf Claim 1.}
 $\lambda_{i-k+1}(H)\ge\lambda_i(H')$  for $i=k, \dots, n$.

It is trivial for $i=n$. Suppose that $i=k, \dots, n-1$. By the Courant-Fischer theorem,
\begin{align*}
\lambda_i(H')&=\max_{\mathbf{x}^{(1)},\dots,\mathbf{x}^{(n-i)}\in\mathbb{R}^n}
\min_{\mathbf{z}\in \mathbb{R}^n\setminus\{\mathbf{0}_n\} \atop \mathbf{z}\bot\mathbf{x}^{(1)},\dots,\mathbf{x}^{(n-i)}}M_{H'}(\mathbf{z})\\
&\le  \max_{\mathbf{x}^{{(1)}},\dots,\mathbf{x}^{(n-i)}\in\mathbb{R}^n}
\min_{\mathbf{z}\in \mathbb{R}^n\setminus\{\mathbf{0}_n\} \atop {\mathbf{z}\bot \mathbf{x}^{{(j)}}, j=1,\dots,n-i\atop \mathbf{z}\bot  (\mathbf{e}_1+(k-1)\mathbf{e}_\ell),\ell=2,\dots,k}}
M_{H'}(\mathbf{z})
\end{align*}
For fixed $\mathbf{x}^{(1)},\dots,\mathbf{x}^{(n-i)}\in \mathbb{R}^n$, as $(n-i)+(k-1)<n$, there is a vector $\mathbf{z}\in \mathbb{R}^n\setminus\{\mathbf{0}_n\}$ satisfying
$\mathbf{z}\bot \mathbf{x}^{{(j)}}$ for $j=1,\dots,n-i$ and $\mathbf{z}\bot  (\mathbf{e}_1+(k-1)\mathbf{e}_\ell)$ for $\ell=2,\dots,k$.
Let $a=z_{v_k}$. Then
$z_1=-(k-1)a$ and $z_{v_\ell}=a$ for $\ell=2,\dots,k$.
Thus
\begin{align*}
M_{H'}(\mathbf{z})&=\frac{\sum_{f\in E(H)}\frac{1}{k-1}\sum_{\{u,v\}\subseteq f}(z_u-z_v)^2-\frac{1}{k-1}\sum_{\{u,v\}\subseteq e}(z_u-z_v)^2}{\sum_{u\in V(H)}d_{H}(u)z_u^2-\sum_{u\in e\setminus W}z_u^2}\\
&=\frac{\sum_{f\in E(H)}\frac{1}{k-1}\sum_{\{u,v\}\subseteq f}(z_u-z_v)^2-k^2a^2}{\sum_{u\in V(H)}d_{H}(u)z_u^2-(k(k-1)-t)a^2}\\
&\le \frac{\sum_{f\in E(H)}\frac{1}{k-1}\sum_{\{u,v\}\subseteq f}(z_u-z_v)^2-k^2a^2}{\sum_{u\in V(H)}d_{H}(u)z_u^2-k(k-1)a^2}.
\end{align*}
As in \eqref{e}, we have $\sum_{f\in E(H)}\frac{1}{k-1}\sum_{\{u,v\}\subseteq f}(z_u-z_v)^2\le \frac{k}{k-1}\sum_{u\in V(H)}d_H(u)z_u^2$, so
\[
M_{H'}(\mathbf{z})
\le\frac{\sum_{f\in E(H)}\frac{1}{k-1}\sum_{\{u,v\}\subseteq f}(z_u-z_v)^2}{\sum_{u\in V(H)}d_{H}(u)z_u^2}\\
=M_H(\mathbf{z}).
\]
Thus
\[
\lambda_i(H')\le  \max_{\mathbf{x}^{(1)},\dots,\mathbf{x}^{(n-i)}\in\mathbb{R}^n}
\min_{\mathbf{z}\in \mathbb{R}^n\setminus\{\mathbf{0}_n\} \atop {\mathbf{z}\bot \mathbf{x}^{{(j)}}, j=1,\dots,n-i\atop \mathbf{z}\bot  (\mathbf{e}_1+(k-1)\mathbf{e}_\ell),\ell=2,\dots,k}}M_H(\mathbf{z}).
\]
Now by the Courant-Fischer theorem again, we have
\begin{align*}
\lambda_{i-k+1}(H)&=\max_{\mathbf{x}^{(1)},\dots,\mathbf{x}^{(n-i+k-1)}\in \mathbb{R}^n}
\min_{\mathbf{z}\in \mathbb{R}^n\setminus\{\mathbf{0}_n\}
\atop \mathbf{z}\bot \mathbf{x}^{(j)}, j=1,\dots,n-i+k-1}M_H(\mathbf{z})\\
&\ge \max_{\mathbf{x}^{(1)},\dots,\mathbf{x}^{(n-i+k-1)}\in \mathbb{R}^n
\atop {\mathbf{x}^{(n-i+\ell-1)}=\mathbf{e}_1+(k-1)\mathbf{e}_\ell,\ell=2,\dots,k}}
\min_{\mathbf{z}\in \mathbb{R}^n\setminus\{\mathbf{0}_n\} \atop \mathbf{z}\bot \mathbf{x}^{(j)}, j=1,\dots,n-i+k-1}M_H(\mathbf{z})\\
&= \max_{\mathbf{x}^{(1)},\dots,\mathbf{x}^{(n-i)}\in \mathbb{R}^n}
\min_{\mathbf{z}\in \mathbb{R}^n\setminus\{\mathbf{0}_n\} \atop {\mathbf{z}\bot \mathbf{x}^{(j)}, j=1,\dots,n-i\atop
\mathbf{z}\bot
(\mathbf{e}_1+(k-1)\mathbf{e}_\ell),\ell=2,\dots,k
}}M_H(\mathbf{z}).
\end{align*}
So Claim 1 follows.

\noindent
{\bf Claim 2.}
$\lambda_{i}(H')\ge \lambda_{i+k-1}(H)$  for $i=1, \dots, n-k+1$.

By the Courant-Fischer theorem, we have
\begin{align*}
\lambda_i(H')&=\min_{\mathbf{x}^{(1)},\dots,\mathbf{x}^{(i-1)}\in\mathbb{R}^n}
\max_{\mathbf{x}\in \mathbb{R}^n\setminus\{\mathbf{0}_n\}\atop \mathbf{x}\bot \mathbf{x}^{(j)},j=1,\dots,i-1}M_{H'}(\mathbf{x})\\
&\ge\min_{\mathbf{x}^{{(1)}},\dots,\mathbf{x}^{(i-1)}\in\mathbb{R}^n}
\max_{\mathbf{x}\in \mathbb{R}^n\setminus\{\mathbf{0}_n\} \atop {\mathbf{x}\bot \mathbf{x}^{(j)},j=1,\dots,i-1,\atop x_1=\dots =x_k}}\frac{\sum_{f\in E(H)}\frac{1}{k-1}\sum_{\{u,v\}\subseteq f}(x_u-x_v)^2}{\sum_{u\in V(H)}d_{H}(u)x_u^2-(k-t)x_1^2}\\
&=\min_{\mathbf{x}^{(1)},\dots,\mathbf{x}^{(i-1)}\in\mathbb{R}^n}
\max_{\mathbf{x}\in \mathbb{R}^n\setminus\{\mathbf{0}_n\}\atop {\mathbf{x}\bot \mathbf{x}^{(j)},j=1,\dots,i-1 \atop \mathbf{x}\bot (\mathbf{e}_1-\mathbf{e}_\ell),\ell=2,\dots,k}}\frac{\sum_{f\in E(H)}\frac{1}{k-1}\sum_{\{u,v\}\subseteq f}(x_u-x_v)^2}{\sum_{u\in V(H)}d_{H}(u)x_u^2-(k-t)x_1^2}\\
&\ge \min_{\mathbf{x}^{(1)},\dots,\mathbf{x}^{(i-1)}\in\mathbb{R}^n}
\max_{\mathbf{x}\in \mathbb{R}^n\setminus\{\mathbf{0}_n\} \atop {\mathbf{x}\bot \mathbf{x}^{(j)},j=1,\dots,i-1\atop \mathbf{x}\bot (\mathbf{e}_1-\mathbf{e}_\ell),\ell=2,\dots,k}}M_H(\mathbf{x})\\
&\ge \min_{\mathbf{x}^{(1)},\dots,\mathbf{x}^{(i+k-2)}\in\mathbb{R}^n}
\max_{\mathbf{x}\in \mathbb{R}^n\setminus\{\mathbf{0}_n\} \atop
\mathbf{x}\bot \mathbf{x}^{(j)},j=1,\dots,i+k-2}M_H(\mathbf{x})\\
&=\lambda_{i+k-1}(H),
\end{align*}
so Claim 2 follows.

Now the result follows by combining Claims 1 and 2.
\end{proof}

\section{Cheeger inequality} \label{sec5}

In this section, we give a Cheeger inequality to relate the Cheeger constant to the normalized Laplacian eigenvalues of hypergraphs.
We extend such the Cheeger inequality
established by Chung \cite{chung} (see also \cite{chung4th,Ch}) from graphs to uniform hypergraphs as follows.

\begin{theorem}\label{cheeup}
Let $H$ be a connected $k$-uniform hypergraph on $n$ vertices.
Then
\[
\frac{h^2(H)}{2(k-1)^2}\le 1-\sqrt{1-\frac{h^2(H)}{(k-1)^2}}\le \lambda_{n-1}(H)\le \frac{2}{k-1}h(H).
 \]
\end{theorem}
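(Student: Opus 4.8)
The plan is to prove the two inequalities separately, following the classical template for the normalized Laplacian of graphs (Chung's method) but keeping careful track of the factor $k-1$ that distinguishes the hypergraph setting. Throughout I will work with the Rayleigh quotient $R_H(\mathbf{y})$ from \eqref{xz24} and the identity from Lemma \ref{n-}, which expresses
\[
\lambda_{n-1}(H)=\frac{1}{k-1}\min_{\mathbf{y}\bot D(H)\mathbf{1}_n}\frac{\sum_{\{u,v\}\subseteq V(H)}e_{uv}(y_u-y_v)^2}{\sum_{u\in V(H)}d_uy_u^2}.
\]
This already carries the explicit $\frac{1}{k-1}$, which is why the right-hand bound reads $\frac{2}{k-1}h(H)$ rather than the graph value $2h(H)$.

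For the upper bound $\lambda_{n-1}(H)\le \frac{2}{k-1}h(H)$, I would use the standard indicator-vector test. Fix a nonempty $S\subset V(H)$ with $\mathrm{vol}(S)\le\mathrm{vol}(\overline{S})$ achieving $h(H)$, and construct the test vector $\mathbf{y}$ defined by $y_u=\mathrm{vol}(\overline{S})$ for $u\in S$ and $y_u=-\mathrm{vol}(S)$ for $u\in \overline{S}$, which is engineered so that $\mathbf{y}\bot D(H)\mathbf{1}_n$, exactly as in the proof of Theorem \ref{subset}. Plugging this into the Rayleigh quotient, the numerator $\sum_{\{u,v\}}e_{uv}(y_u-y_v)^2$ collapses to $|\partial S|\,(\mathrm{vol}(S)+\mathrm{vol}(\overline{S}))^2$ (since $(y_u-y_v)^2$ is nonzero only across the cut), while the denominator equals $\mathrm{vol}(S)\mathrm{vol}(\overline{S})\,(\mathrm{vol}(S)+\mathrm{vol}(\overline{S}))$. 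After cancellation this gives $\lambda_{n-1}(H)\le \frac{1}{k-1}\cdot\frac{|\partial S|(\mathrm{vol}(S)+\mathrm{vol}(\overline{S}))}{\mathrm{vol}(S)\mathrm{vol}(\overline{S})}$, and using $\mathrm{vol}(S)\le\mathrm{vol}(\overline{S})$ to bound $\frac{\mathrm{vol}(S)+\mathrm{vol}(\overline{S})}{\mathrm{vol}(\overline{S})}\le 2$ yields $\frac{2}{k-1}\cdot\frac{|\partial S|}{\mathrm{vol}(S)}=\frac{2}{k-1}h(H)$.

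For the lower bound, the harder and more interesting direction, I would prove the two pieces of the maximum separately. Let $\mathbf{y}$ be an optimal eigenvector for $\lambda_{n-1}$ (so $\mathbf{y}\bot D(H)\mathbf{1}_n$), ordered so that the positive support is the smaller-volume side, and use the sweep (threshold) argument: order the vertices by the value $y_u$ and consider the level sets $S_t=\{u:y_u>t\}$. The crux is the co-area / Cauchy--Schwarz manipulation showing that some threshold cut $S_t$ satisfies $\frac{|\partial S_t|}{\mathrm{vol}(S_t)}\le\sqrt{2(k-1)\lambda_{n-1}(H)}$, which rearranges to $\frac{h^2(H)}{2(k-1)}\le\lambda_{n-1}(H)$; here the key inequality
\[
\sum_{\{u,v\}}e_{uv}\,|y_u^2-y_v^2|\;\le\;\Bigl(\sum_{\{u,v\}}e_{uv}(y_u-y_v)^2\Bigr)^{1/2}\Bigl(\sum_{\{u,v\}}e_{uv}(y_u+y_v)^2\Bigr)^{1/2}
\]
is the hypergraph analogue of the graph estimate, and the second factor is controlled by $2\sum_u d_u y_u^2$ after using $(y_u+y_v)^2\le 2(y_u^2+y_v^2)$ together with the uniformity relation $\sum_{\{u,v\}\subseteq V(H)}e_{uv}(y_u^2+y_v^2)=(k-1)\sum_u d_u y_u^2$. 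For the sharper second term $1-\sqrt{1-\frac{h^2(H)}{(k-1)^2}}$, I expect to avoid the crude step $(y_u+y_v)^2\le 2(y_u^2+y_v^2)$ and instead keep the cross term, setting $\lambda=\lambda_{n-1}(H)$ and deriving $\frac{|\partial S_t|}{\mathrm{vol}(S_t)}\le\sqrt{\lambda(2-\lambda)}\,(k-1)$, i.e. $h(H)\le (k-1)\sqrt{\lambda(2-\lambda)}$, which solves to $\lambda\ge 1-\sqrt{1-h^2(H)/(k-1)^2}$. The main obstacle is getting the Cauchy--Schwarz normalization exactly right so that the counting of edges across threshold cuts telescopes correctly into $|\partial S_t|$; the uniformity hypothesis is precisely what lets the edge multiplicities combine cleanly into co-degrees $e_{uv}$, so I would lean on the identity already used in Lemma \ref{n-} to convert the edge sum into the pairwise $e_{uv}$ sum before running the sweep.
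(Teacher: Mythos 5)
Your proposal follows the paper's proof essentially step for step: the upper bound via the two-valued test vector orthogonal to $D(H)\mathbf{1}_n$ (yours is a scalar multiple of the paper's), and the lower bounds via restricting the eigenvector to its positive part, the level-set sweep, the Cauchy--Schwarz step on $\sum_{\{u,v\}}e_{uv}|y_u^2-y_v^2|$, and, for the sharper term, keeping the cross term so that $\sum_{\{u,v\}}e_{uv}(y_u+y_v)^2$ becomes $(2-\lambda)$ times the denominator and solving the resulting quadratic in $\lambda$. The upper bound and the $1-\sqrt{1-h^2(H)/(k-1)^2}$ term go through exactly as you describe.

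The one genuine gap is the constant in the $\frac{h^2(H)}{2(k-1)}$ term. You correctly record the uniformity relation $\sum_{\{u,v\}\subseteq V(H)}e_{uv}(y_u^2+y_v^2)=(k-1)\sum_u d_uy_u^2$, but then assert that $(y_u+y_v)^2\le 2(y_u^2+y_v^2)$ controls $\sum_{\{u,v\}}e_{uv}(y_u+y_v)^2$ by $2\sum_u d_uy_u^2$; the two facts you cite actually give $2(k-1)\sum_u d_uy_u^2$, and the stronger claim is false for $k>2$ (a single edge with $k=3$ and $\mathbf{y}$ the all-ones vector gives left side $12$ against $2\sum_u d_uy_u^2=6$). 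Running your argument with the corrected constant yields only $\lambda_{n-1}(H)\ge \frac{h^2(H)}{2(k-1)^2}$, which is weaker than the stated bound and is already implied by the second term of the maximum since $1-\sqrt{1-t}\ge t/2$. For what it is worth, the paper's own proof of this sub-lemma contains the identical step (it asserts $\sum_u\sum_{v\ne u}e_{uv}(y_u^2+y_v^2)=2\sum_u d_uy_u^2$), so you have reproduced the published argument faithfully, weak point included; but as written neither your sketch nor the paper's computation delivers the constant $\frac{1}{2(k-1)}$ for $k>2$, and this step would need a different idea (or the bound would need to be restated) to close.
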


Let $H$ be a connected $k$-uniform hypergraph on $n$ vertices. We divide the proof of Theorem \ref{cheeup} into  the following three lemmas by merging the techniques from \cite{Ch,Mo}.  
Obviously,  $\frac{h^2(H)}{2(k-1)^2}\le 1-\sqrt{1-\frac{h^2(H)}{(k-1)^2}}$.
Losing a factor $k-1$ in estimating $\sum_{\{u,v\}\subseteq V(H)} e_{uv}(y_u+y_v)^2$ in the proof of Lemma \ref{ad1} led to  the incorrect form:
\[
\max\left\{\frac{h^2(H)}{2(k-1)},1-\sqrt{1-\frac{h^2(H)}{(k-1)^2}}\right\}\le \lambda_{n-1}(H).
\]

\begin{lemma} $\lambda_{n-1}(H)\le \frac{2}{k-1}h(H)$.
\end{lemma}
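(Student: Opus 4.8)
The plan is to exhibit an explicit test vector that realizes (or nearly realizes) the Cheeger constant and plug it into the variational characterization of $\lambda_{n-1}(H)$ from Lemma~\ref{n-}. Recall that
\[
\lambda_{n-1}(H)=\frac{1}{k-1}\min_{\mathbf{y}\bot D(H)\mathbf{1}_n}\frac{\sum_{\{u,v\}\subseteq V(H)}e_{uv}(y_u-y_v)^2}{\sum_{u\in V(H)}d_uy_u^2},
\]
so any admissible $\mathbf{y}$ yields an upper bound. Let $S$ be a set achieving the Cheeger constant, i.e. $\mathrm{vol}(S)\le\mathrm{vol}(\overline S)$ and $h(H)=|\partial S|/\mathrm{vol}(S)$. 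The natural candidate is a shifted indicator of $S$: I would set $y_u=\mathrm{vol}(\overline S)$ for $u\in S$ and $y_u=-\mathrm{vol}(S)$ for $u\in\overline S$ (the same construction used in the proof of Theorem~\ref{subset}), which is automatically orthogonal to $D(H)\mathbf{1}_n$ since $\sum_u d_u y_u=\mathrm{vol}(S)\mathrm{vol}(\overline S)-\mathrm{vol}(\overline S)\mathrm{vol}(S)=0$.

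**Next I would evaluate the Rayleigh quotient on this vector.** The numerator $\sum_{\{u,v\}}e_{uv}(y_u-y_v)^2$ collapses to terms with one endpoint in $S$ and one in $\overline S$: there the difference is $y_u-y_v=\mathrm{vol}(\overline S)+\mathrm{vol}(S)=km$ (using $k$-uniformity and $\mathrm{vol}(V(H))=km$), giving $|\partial S|\,(km)^2$, since $\sum_{u\in S}\sum_{v\in\overline S}e_{uv}=|\partial S|$. The denominator is $\mathrm{vol}(S)\mathrm{vol}(\overline S)^2+\mathrm{vol}(\overline S)\mathrm{vol}(S)^2=\mathrm{vol}(S)\mathrm{vol}(\overline S)\,(km)$. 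Dividing and folding in the factor $1/(k-1)$ yields
\[
\lambda_{n-1}(H)\le\frac{1}{k-1}\cdot\frac{|\partial S|\,km}{\mathrm{vol}(S)\mathrm{vol}(\overline S)}.
\]
This is exactly the expression from Theorem~\ref{subset} read as an upper bound rather than a lower bound.

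**The final step is to bound $\mathrm{vol}(\overline S)$ from below** to convert the right-hand side into $\tfrac{2}{k-1}h(H)$. Since $\mathrm{vol}(S)\le\mathrm{vol}(\overline S)$ we have $\mathrm{vol}(\overline S)\ge\tfrac12\,km$, hence $km/\mathrm{vol}(\overline S)\le 2$, and therefore
\[
\lambda_{n-1}(H)\le\frac{1}{k-1}\cdot\frac{|\partial S|}{\mathrm{vol}(S)}\cdot\frac{km}{\mathrm{vol}(\overline S)}\le\frac{2}{k-1}\cdot\frac{|\partial S|}{\mathrm{vol}(S)}=\frac{2}{k-1}h(H),
\]
which is the claim.

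**I do not expect a genuine obstacle here;** this is the easy direction of the Cheeger inequality, where a single well-chosen test function suffices. The only points requiring care are the bookkeeping that $\sum_{u\in S}\sum_{v\in\overline S}e_{uv}=|\partial S|$ (which follows from the stated identity $|\partial S|=\sum_{u\in S}\sum_{v\in\overline S}e_H(u,v)$) and the use of $k$-uniformity to replace $\mathrm{vol}(S)+\mathrm{vol}(\overline S)$ by $km$. The genuinely hard work—constructing a sweep/threshold argument to produce the lower bounds $\tfrac{h^2}{2(k-1)}$ and $1-\sqrt{1-h^2/(k-1)^2}$—will be deferred to the remaining two lemmas.
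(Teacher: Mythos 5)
Your proof is correct and is essentially the paper's own argument: the paper uses the test vector with values $1/\mathrm{vol}(S)$ on $S$ and $-1/\mathrm{vol}(\overline S)$ on $\overline S$, which is just a scalar multiple of yours, and your final bound $km/\mathrm{vol}(\overline S)\le 2$ is the same step as the paper's $\frac{1}{\mathrm{vol}(S)}+\frac{1}{\mathrm{vol}(\overline S)}\le\frac{2}{\mathrm{vol}(S)}$. All the bookkeeping (orthogonality to $D(H)\mathbf{1}_n$, the identity $\sum_{u\in S}\sum_{v\in\overline S}e_{uv}=|\partial S|$, and $\mathrm{vol}(V(H))=km$) checks out.
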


\begin{proof}
Let $S\subset V(H)$ be the set of vertices such that $h(H)=\frac{|\partial S|}{\mathrm{vol}(S)}$.
Let $\mathbf{x}$ be an $n$-dimensional column vector defined on $V(H)$ with
\[
x_u=\begin{cases}
\frac{1}{\mathrm{vol}(S)},& \mbox{if }u\in S,\\
-\frac{1}{\mathrm{vol}(\overline{S})},& \mbox{otherwise}.
\end{cases}
\]
As
\[
\mathbf{x}^\top D(H)\mathbf{1}_n=\sum_{u\in V(H)}x_ud_u=\sum_{u\in S}\frac{d_u}{\mathrm{vol}(S)}-\sum_{u\in \overline{S}}\frac{d_u}{\mathrm{vol}(\overline{S})}=0,
\]
we have by Lemma \ref{n-} that
\begin{align*}
\lambda_{n-1}(H)&\le \frac{\sum_{\{u,v\}\subseteq V(H)}e_{uv}(x_u-x_v)^2}{(k-1)\sum_{u\in V(H)}d_ux_u^2}\\
&=\frac{\sum_{u\in S}\sum_{v\in \overline{S}}e_{uv}\left(\frac{1}{\mathrm{vol}(S)}+\frac{1}{\mathrm{vol}(\overline{S})}\right)^2}{\left(k-1\right)\left(\sum_{u\in S}d_u\frac{1}{\mathrm{vol}(S)^2}+\sum_{u\in \overline{S}}d_u\frac{1}{\mathrm{vol}(\overline{S})^2}\right)}\\
&=\frac{1}{k-1}\left(\frac{1}{\mathrm{vol}(S)}+\frac{1}{\mathrm{vol}(\overline{S})}\right)\sum_{u\in S}\sum_{v\in \overline{S}}e_{uv}.
\end{align*}
As $\mathrm{vol}(S)\le \mathrm{vol}(\overline{S})$, we have
\[
\lambda_{n-1}(H)\le \frac{2}{k-1}\cdot\frac{1}{\mathrm{vol}(S)}\sum_{u\in S}\sum_{v\in \overline{S}}e_{uv}
=\frac{2}{k-1}\cdot\frac{|\partial S|}{\mathrm{vol}(S)}
=\frac{2}{k-1}h(H). \qedhere
\]
\end{proof}

\begin{lemma} \label{ad1}
$\lambda_{n-1}(H)\ge \frac{h^2(H)}{2(k-1)^2}$.
\end{lemma}

In the published version, it appeared to be $\lambda_{n-1}(H)\ge \frac{h^2(H)}{2(k-1)}$.

\begin{proof}
Let $\mathbf{z}$ be an eigenvector of $\mathcal{L}(H)$ corresponding to $\lambda_{n-1}(H)$. Let $\mathbf{x}=D(H)^{-1/2}\mathbf{z}$.
From $\lambda_{n-1}(H) \mathbf{z}=\mathcal{L}(H)\mathbf{z}$, we have
\[
\lambda_{n-1}(H) z_u=z_u-\sum_{v\in V(H)\setminus\{u\}}\frac{e_{uv}}{(k-1)\sqrt{d_ud_v}}z_v \mbox{ for } u\in V(H),
\]
i.e.,
\[
\lambda_{n-1}(H) \sqrt{d_u}x_u=\sqrt{d_u}x_u-\sum_{v\in V(H)\setminus\{u\}}\frac{e_{uv}}{(k-1)\sqrt{d_ud_v}}\sqrt{d_v}x_v  \mbox{ for } u\in V(H).
\]
Let $W=\{u\in V(H):x_u>0\}$. it is evident that $\emptyset\ne W\subset V(H)$.
Assume that $\mathrm{vol}(W)\le \mathrm{vol}(\overline{W})$.
Let $\mathbf{y}$ be the $n$-dimensional vector defined on $V(H)$ with
\[
y_u=\begin{cases}
	x_u, & \mbox{if }u\in W,\\
	0, & \mbox{otherwise.}
\end{cases}
\]
Then
\begin{align*}
\lambda_{n-1}(H)\sum_{u\in W}d_ux_u^2
&=\sum_{u\in W}\sqrt{d_u}x_u\left(\lambda_{n-1}(H) \sqrt{d_u}x_u\right)\\
&=\sum_{u\in W}\sqrt{d_u}x_u\left(\sqrt{d_u}x_u-\sum_{v\ne u}\frac{e_{uv}}{(k-1)\sqrt{d_ud_v}}\sqrt{d_v}x_v \right)\\
&=\sum_{u\in W}x_u\left(d_ux_u-\sum_{v\in V(H)\setminus \{u\}}\frac{e_{uv}}{k-1}x_v \right).
\end{align*}
As $d_u=\frac{1}{k-1}\sum_{v\in V(H)\setminus \{u\}}e_{uv}$ for any $u\in V(H)$, we have
\begin{align*}
&\quad \sum_{u\in W}x_u\left(d_ux_u-\sum_{v\in V(H)\setminus \{u\}}\frac{e_{uv}}{k-1}x_v \right)\\
&=\frac{1}{k-1}\sum_{u\in W}x_u\sum_{v\in V(H)\setminus \{u\}}e_{uv}(x_u-x_v)\\
&=\frac{1}{k-1}\left(\sum_{u\in W}x_u\sum_{v\in W\setminus\{u\}}e_{uv}(x_u-x_v)+\sum_{u\in W}x_u\sum_{v\in \overline{W}}e_{uv}(x_u-x_v)\right)\\
&\ge \frac{1}{k-1}\left(\sum_{u\in W}x_u\sum_{v\in W\setminus \{u\}}e_{uv}(x_u-x_v)+\sum_{u\in W}x_u\sum_{v\in \overline{W}}e_{uv}x_u\right)\\
&=\frac{1}{k-1}\left(\sum_{u\in W}y_u\sum_{v\in W\setminus\{u\}}e_{uv}(y_u-y_v)+\sum_{u\in W}y_u\sum_{v\in \overline{W}}e_{uv}y_u\right).
\end{align*}
Note that in the sum $\sum_{u\in W}y_u\sum_{v\in W\setminus\{u\}}e_{uv}(y_u-y_v)$, any pair $\{u,v\}\subseteq W$ contributes $e_{uv}(y_u-y_v)y_u+e_{uv}(y_v-y_u)y_v=e_{uv}(y_u-y_v)^2$. That is,
\[
\sum_{u\in W}y_u\sum_{v\in W\setminus\{u\}}e_{uv}(y_u-y_v)=\sum_{\{u,v\}\subseteq W}e_{uv}(y_u-y_v)^2.
\]
Note also that
\[
\sum_{u\in W}y_u\sum_{v\in \overline{W}}e_{uv}y_u=\sum_{u\in W}\sum_{v\in \overline{W}}e_{uv}(y_u-y_v)^2
\]
and
\[
\sum_{\{u,v\}\subseteq V(H)}e_{uv}(y_u-y_v)^2=\sum_{\{u,v\}\subseteq W}e_{uv}(y_u-y_v)^2+\sum_{u\in W}\sum_{v\in \overline{W}}e_{uv}(y_u-y_v)^2.
\]
Thus
\begin{equation}\label{M}
\lambda_{n-1}(H) \sum_{u\in W}d_ux_u^2\ge\frac{1}{k-1}\sum_{\{u,v\}\subseteq V(H)} e_{uv}(y_u-y_v)^2,
\end{equation}
which implies that
\begin{equation}\label{3a}
\begin{aligned}
\lambda_{n-1}(H)
& \ge \frac{\sum_{\{u,v\}\subseteq V(H)} e_{uv}(y_u-y_v)^2}{(k-1)\sum_{u\in W}d_uy_u^2}\\
&=\frac{\sum_{\{u,v\}\subseteq V(H)} e_{uv}(y_u-y_v)^2\sum_{\{u,v\}\subseteq V(H)} e_{uv}(y_u+y_v)^2}{(k-1)\sum_{u\in W}d_uy_u^2\sum_{\{u,v\}\subseteq V(H)} e_{uv}(y_u+y_v)^2}\\
&\ge \frac{\left(\sum_{\{u,v\}\subseteq V(H)} e_{uv}|y_u^2-y_v^2|\right)^2}{(k-1)\sum_{u\in W}d_uy_u^2\sum_{\{u,v\}\subseteq V(H)} e_{uv}(y_u+y_v)^2}\\
&\ge \frac{\left(\sum_{\{u,v\}\subseteq V(H)} e_{uv}|y_u^2-y_v^2|\right)^2}{2(k-1)^2\left(\sum_{u\in W}d_uy_u^2\right)^2},
\end{aligned}
\end{equation}
where the second inequality follows from the Cauchy-Schwarz inequality and the third inequality follows because
\[
\sum_{\{u,v\}\subseteq V(H)} e_{uv}(y_u+y_v)^2
\le \sum_{u\in V(H)}\sum_{v\in V(H)\setminus\{u\}}e_{uv}(y_u^2+y_v^2)
=2(k-1)\sum_{u\in V(H)}d_uy_u^2.
\]
The factor $k-1$ is lost  in $2(k-1)\sum_{u\in V(H)}d_uy_u^2$ in early versions.

Assume that $\{y_u:u\in V(H) \}=\{t_0,t_1,\dots,t_m\}$, where $0=t_0<t_1<\dots<t_m$.
Let $V_i=\{u\in V(H):y_u\ge t_i\}$ for $i=0,1,\dots,m$.
Then
\begin{align*}
\sum_{\{u,v\}\subseteq V(H)} e_{uv}|y_u^2-y_v^2|
&=\sum_{0\le a<c\le m}\sum_{u\in V(H)\atop y_u=t_a}\sum_{v\in V(H)\atop  y_v=t_c}e_{uv}(t_c^2-t_a^2)\\
&=\sum_{0\le a<c\le m}\sum_{u\in V(H)\atop  y_u=t_a}\sum_{v\in V(H)\atop  y_v=t_c}e_{uv}\sum_{i=a+1}^{c}(t_i^2-t_{i-1}^2)\\
&=\sum_{i=1}^m\sum_{a=0}^{i-1}\sum_{c=i}^m\sum_{u\in V(H)\atop y_u=t_a}\sum_{v\in V(H)\atop y_v=t_c}e_{uv}(t_i^2-t_{i-1}^2)\\
&=\sum_{i=1}^m\sum_{u\in \overline{V_i}}\sum_{v\in V_i}e_{uv}(t_i^2-t_{i-1}^2)\\
&=\sum_{i=1}^m|\partial V_i|\left(t_i^2-t_{i-1}^2\right).
\end{align*}
Note that $V_m\subset V_{m-1}\subset \dots V_1=W\subseteq V_0=V(H)$. So $\mathrm{vol}(V_i)\le \mathrm{vol}(W)\le \mathrm{vol}(\overline{W})\le \mathrm{vol}(\overline{V_i})$, and thus  $|\partial V_i|\ge h(H)\mathrm{vol}(V_i)$ for $i=1,\dots,m$. Therefore
\begin{equation}\label{3b}
\begin{aligned}
\sum_{\{u,v\}\subseteq V(H)} e_{uv}|y_u^2-y_v^2|
&\ge h(H)\sum_{i=1}^m \mathrm{vol}(V_i)\left(t_i^2-t_{i-1}^2\right)\\
&=h(H)\sum_{i=1}^m \mathrm{vol}(V_i)t_i^2-\sum_{i=1}^{m-1}\mathrm{vol}(V_{i+1})t_i^2\\
&=h(H)\sum_{i=1}^{m-1} \left(\mathrm{vol}(V_i\setminus V_{i+1})t_i^2+ \mathrm{vol}(V_m)t_m^2\right)\\
&=h(H)\sum_{u\in W}d_uy_u^2.
\end{aligned}
\end{equation}

By combining \eqref{3a} and \eqref{3b}, we complete the proof.
\end{proof}

\begin{lemma}
$\lambda_{n-1}(H)\ge 1-\sqrt{1-\frac{h^2(H)}{(k-1)^2}}$.
\end{lemma}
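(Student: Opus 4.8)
The plan is to keep the eigenvector and truncation apparatus from the previous lemma and to sharpen only the final estimate, replacing a crude upper bound by an exact identity. So I let $\mathbf{z}$ be an eigenvector of $\mathcal{L}(H)$ for $\lambda_{n-1}(H)$, set $\mathbf{x}=D(H)^{-1/2}\mathbf{z}$, put $W=\{u\in V(H):x_u>0\}$ with $\mathrm{vol}(W)\le\mathrm{vol}(\overline{W})$, and take $\mathbf{y}$ to be the truncation of $\mathbf{x}$ to $W$, exactly as above. Abbreviating $A=\sum_{\{u,v\}\subseteq V(H)}e_{uv}(y_u-y_v)^2$ and $B=\sum_{u\in V(H)}d_uy_u^2>0$, the inequality \eqref{M} already supplies $A\le(k-1)\lambda_{n-1}(H)B$, while \eqref{3b} already supplies $\sum_{\{u,v\}\subseteq V(H)}e_{uv}|y_u^2-y_v^2|\ge h(H)B$. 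I may assume $\lambda_{n-1}(H)\le 1$, since otherwise the right-hand side is at most $1<\lambda_{n-1}(H)$ and the inequality holds trivially; and $h(H)\le k-1$ (because $|\partial S|\le(k-1)\mathrm{vol}(S)$ always), so the square root is real.

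The new ingredient is an exact evaluation of $\sum_{\{u,v\}}e_{uv}(y_u+y_v)^2$. Using $\sum_{v\neq u}e_{uv}=(k-1)d_u$ one gets $\sum_{\{u,v\}\subseteq V(H)}e_{uv}(y_u^2+y_v^2)=(k-1)B$, whence, expanding $(y_u+y_v)^2=2(y_u^2+y_v^2)-(y_u-y_v)^2$,
\[
\sum_{\{u,v\}\subseteq V(H)}e_{uv}(y_u+y_v)^2=2(k-1)B-A.
\]
Applying the Cauchy--Schwarz inequality to \eqref{3b} then yields
\[
h(H)B\le\sum_{\{u,v\}\subseteq V(H)}e_{uv}|y_u-y_v|\,|y_u+y_v|\le\sqrt{A}\,\sqrt{2(k-1)B-A},
\]
and squaring gives $h^2(H)B^2\le A\bigl(2(k-1)B-A\bigr)$. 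The point is that keeping this quantity exact, rather than bounding it above by $2(k-1)B$ as in the previous lemma, is what preserves the extra factor that upgrades the constant.

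The crux, and the step I expect to demand the most care, is to pass from this quadratic bound to the stated eigenvalue inequality. The quadratic $f(A)=A\bigl(2(k-1)B-A\bigr)$ increases on $[0,(k-1)B]$; the reduction $\lambda_{n-1}(H)\le 1$ together with $A\le(k-1)\lambda_{n-1}(H)B$ places $A$ in this range, so $f(A)\le f\bigl((k-1)\lambda_{n-1}(H)B\bigr)=(k-1)^2\lambda_{n-1}(H)\bigl(2-\lambda_{n-1}(H)\bigr)B^2$. Cancelling $B^2$ leaves $\frac{h^2(H)}{(k-1)^2}\le\lambda_{n-1}(H)\bigl(2-\lambda_{n-1}(H)\bigr)=1-\bigl(1-\lambda_{n-1}(H)\bigr)^2$, and taking square roots---valid because $1-\lambda_{n-1}(H)\ge 0$---gives $1-\lambda_{n-1}(H)\le\sqrt{1-\frac{h^2(H)}{(k-1)^2}}$, i.e.\ the claim. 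The two delicate points to verify are that $A$ indeed lands in the increasing range of $f$ (guaranteed by the case reduction, which is genuinely needed since $\lambda_{n-1}(H)>1$ can occur, as the example $R_{4,3}$ shows) and that the final square-root step is applied with the correct sign.
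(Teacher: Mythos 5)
Your proof is correct and follows essentially the same route as the paper: the same truncated eigenvector $\mathbf{y}$, the same exact identity $\sum_{\{u,v\}}e_{uv}(y_u+y_v)^2=2(k-1)B-A$, the same Cauchy--Schwarz step, and the same resulting quadratic constraint. The only difference is cosmetic: the paper works with the exact quotient $M=A/((k-1)B)$, solves $(k-1)^2M^2-2(k-1)^2M+h^2(H)\le 0$ for $M$, and finishes with $\lambda_{n-1}(H)\ge M$, which makes your case split on $\lambda_{n-1}(H)\le 1$ and the monotonicity argument for $f(A)$ unnecessary.
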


\begin{proof} We use notations from the proof of the previous lemma.
From  \eqref{M} and $\sum_{u\in W}d_ux_u^2=\sum_{u\in W}d_uy_u^2$, we have
\[
\lambda_{n-1}(H)\ge \frac{\sum_{\{u,v\}\subseteq V(H)}e_{uv}(y_u-y_v)^2}{(k-1)\sum_{u\in W}d_uy_u^2}:=M.
\]
As
\begin{align*}
\sum_{\{u,v\}\subseteq V(H)} e_{uv}(y_u+y_v)^2&=\sum_{\{u,v\}\subseteq V(H)}e_{uv}\left(2(y_u^2+y_v^2)-(y_u-y_v)^2\right)\\
&=2\sum_{\{u,v\}\subseteq V(H)}e_{uv}(y_u^2+y_v^2)-M(k-1)\sum_{u\in W}d_uy_u^2\\
&=(2-M)(k-1)\sum_{u\in W}d_uy_u^2,
\end{align*}
we have from \eqref{3a} and \eqref{3b} that
\begin{align*}
M
\ge \frac{h^2(H)\left(\sum_{u\in W}d_uy_u^2\right)^2}{(2-M)(k-1)^2\left(\sum_{u\in W}d_uy_u^2\right)^2}=\frac{h^2(H)}{(2-M)(k-1)^2},
\end{align*}
i.e.,
 \[
(k-1)^2M^2-2(k-1)^2M+h^2(H)\le 0,
\]
so
\[
\lambda_{n-1}(H)\ge M\ge1-\sqrt{1-\frac{h^2(H)}{(k-1)^2}}\,. \qedhere
\]
\end{proof}

\section{Discrepancy inequality} \label{sec6}

For any two nonempty subsets $X,Y$ of hypergraph $H$, let
$M(X,Y)$ be the multiset of edges in $H$ with at least one vertex in $X$ and at least one different vertex in $Y$. The multiplicity of an edge $e\in M(X,Y)$ is $\sum_{u\in e\cap X}|e\cap Y\setminus\{u\}|$.
Particulaly, $\partial X=M(X,V(H)\setminus X)$ if $\emptyset\ne X\subset V(H)$.
Let $m(X,Y)=|M(X,Y)|$.
Then $m(X,Y)=\sum_{u\in X}\sum_{v\in Y}e_{uv}$.
If $H$ is an ordinary graph, it is of interest to find a fixed $\alpha$ (depending up normalized Laplacian spectrum) so that
\[
\left|m(X,Y)-\frac{\mathrm{vol}(X)\mathrm{vol}(Y)}{\mathrm{vol}(V(H))}\right|\le \alpha \frac{\sqrt{\mathrm{vol}(X)\mathrm{vol}(\overline{X})\mathrm{vol}(Y)\mathrm{vol}(\overline{Y})}}{\mathrm{vol}(V(H))}
\]
for all $\emptyset\ne X,Y\subset V(H)$. Such an inequality that bounds the edge distribution between two sets based on the spectral gap is known as a discrepancy inequality \cite{Ch}.
It states that a small spectral gap of a graph implies that the edge distribution is close to random.

We extend Chung's discrepancy inequality \cite[Theorem 3]{chung2}
 (or \cite[Theorem 5.2]{chung}) from graphs to $k$-uniform hypergraphs.

\begin{theorem}\label{dis}
Let $H$ be a $k$-uniform hypergraph on $n$ vertices.
For any two subsets $X,Y\subseteq V(H)$, we have
\[
\left|\frac{m(X,Y)}{k-1}-\frac{\mathrm{vol}(X)\mathrm{vol}(Y)}{\mathrm{vol}(V(H))}\right|\le \lambda \frac{\sqrt{\mathrm{vol}(X)\mathrm{vol}(\overline{X})\mathrm{vol}(Y)\mathrm{vol}(\overline{Y})}}{\mathrm{vol}(V(H))},
\]
where $\lambda=\max\{|1-\lambda_i(H)|: i=1,\dots,n-1 \}$.
\end{theorem}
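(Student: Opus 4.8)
The plan is to recast $m(X,Y)/(k-1)$ as a bilinear form in Banerjee's adjacency matrix $A(H)$ and then run the classical expander-mixing-lemma argument through the spectral decomposition of $\mathcal{L}(H)$. First I would dispose of the degenerate cases: if $X$ or $Y$ is empty then $m(X,Y)=0$ and one of $\mathrm{vol}(X),\mathrm{vol}(Y)$ vanishes, so both sides are $0$; and the cases $X=V(H)$ or $Y=V(H)$ are handled by $\mathrm{vol}(\overline{X})=\mathrm{vol}(\overline{Y})=0$ together with $\sum_{u}e_{uv}=(k-1)d_v$. For the main case, let $\mathbf{1}_X\in\{0,1\}^n$ denote the indicator (column) vector of $X$ (entry $1$ at each $u\in X$, entry $0$ elsewhere), and likewise $\mathbf{1}_Y$. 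Since $H$ is $k$-uniform, $a_{uv}=e_{uv}/(k-1)$ for distinct $u,v$ (and $a_{uu}=0$), so
\[
\frac{m(X,Y)}{k-1}=\frac{1}{k-1}\sum_{u\in X}\sum_{v\in Y}e_{uv}=\mathbf{1}_X^\top A(H)\mathbf{1}_Y.
\]
Because $\mathcal{L}(H)=I-D(H)^{-1/2}A(H)D(H)^{-1/2}$ (as $H$ has no isolated vertices), we have $A(H)=D(H)^{1/2}(I-\mathcal{L}(H))D(H)^{1/2}$. Setting $\mathbf{a}=D(H)^{1/2}\mathbf{1}_X$ and $\mathbf{b}=D(H)^{1/2}\mathbf{1}_Y$ turns the target quantity into $\mathbf{a}^\top(I-\mathcal{L}(H))\mathbf{b}$, and I record the identities $\|\mathbf{a}\|^2=\mathrm{vol}(X)$ and $\|\mathbf{b}\|^2=\mathrm{vol}(Y)$.

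Next I would fix an orthonormal basis $\phi_1,\dots,\phi_n$ of eigenvectors with $\mathcal{L}(H)\phi_i=\lambda_i(H)\phi_i$; by Theorem \ref{n0} I may take $\phi_n=D(H)^{1/2}\mathbf{1}_n/\sqrt{\mathrm{vol}(V(H))}$ as the normalized eigenvector for $\lambda_n(H)=0$. Expanding $I-\mathcal{L}(H)=\sum_{i=1}^n(1-\lambda_i(H))\phi_i\phi_i^\top$, the $i=n$ term carries the entire ``expected'' contribution: since $1-\lambda_n(H)=1$ and $\mathbf{a}^\top\phi_n=\mathrm{vol}(X)/\sqrt{\mathrm{vol}(V(H))}$, $\mathbf{b}^\top\phi_n=\mathrm{vol}(Y)/\sqrt{\mathrm{vol}(V(H))}$, it equals $\mathrm{vol}(X)\mathrm{vol}(Y)/\mathrm{vol}(V(H))$. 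Consequently
\[
\frac{m(X,Y)}{k-1}-\frac{\mathrm{vol}(X)\mathrm{vol}(Y)}{\mathrm{vol}(V(H))}=\sum_{i=1}^{n-1}(1-\lambda_i(H))(\mathbf{a}^\top\phi_i)(\mathbf{b}^\top\phi_i).
\]

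Finally I would bound this error term. Using $|1-\lambda_i(H)|\le\lambda$ for $1\le i\le n-1$ and then the Cauchy–Schwarz inequality,
\[
\left|\sum_{i=1}^{n-1}(1-\lambda_i(H))(\mathbf{a}^\top\phi_i)(\mathbf{b}^\top\phi_i)\right|\le\lambda\Bigl(\sum_{i=1}^{n-1}(\mathbf{a}^\top\phi_i)^2\Bigr)^{1/2}\Bigl(\sum_{i=1}^{n-1}(\mathbf{b}^\top\phi_i)^2\Bigr)^{1/2}.
\]
To evaluate the residual sums I invoke Parseval: $\sum_{i=1}^n(\mathbf{a}^\top\phi_i)^2=\|\mathbf{a}\|^2=\mathrm{vol}(X)$, while $(\mathbf{a}^\top\phi_n)^2=\mathrm{vol}(X)^2/\mathrm{vol}(V(H))$, so subtracting and using $\mathrm{vol}(\overline{X})=\mathrm{vol}(V(H))-\mathrm{vol}(X)$ gives $\sum_{i=1}^{n-1}(\mathbf{a}^\top\phi_i)^2=\mathrm{vol}(X)\mathrm{vol}(\overline{X})/\mathrm{vol}(V(H))$, and symmetrically for $\mathbf{b}$ and $Y$. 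Substituting yields exactly the claimed bound.

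I do not anticipate a genuine obstacle here: once Banerjee's $1/(|e|-1)$ normalization is in force, $k$-uniformity collapses $m(X,Y)/(k-1)$ to the ordinary adjacency bilinear form $\mathbf{1}_X^\top A(H)\mathbf{1}_Y$, and the proof becomes the standard expander-mixing argument verbatim; this is precisely why the factor $k-1$ appears on the left. The only step demanding care is the bookkeeping of the $i=n$ eigencomponent — verifying that it supplies exactly the product term $\mathrm{vol}(X)\mathrm{vol}(Y)/\mathrm{vol}(V(H))$ and that removing it converts $\mathrm{vol}(X)$ into $\mathrm{vol}(X)\mathrm{vol}(\overline{X})/\mathrm{vol}(V(H))$ — which is routine given Theorem \ref{n0}.
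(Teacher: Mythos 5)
Your argument is correct and coincides with the paper's own proof: both express $m(X,Y)/(k-1)$ as $\mathbf{1}_X^\top A(H)\mathbf{1}_Y$, rewrite $A(H)=D(H)^{1/2}(I-\mathcal{L}(H))D(H)^{1/2}$, expand in the orthonormal eigenbasis with $\phi_n=D(H)^{1/2}\mathbf{1}_n/\sqrt{\mathrm{vol}(V(H))}$, isolate the $i=n$ term as the expected contribution, and finish with $|1-\lambda_i|\le\lambda$ plus Cauchy--Schwarz and the Parseval identities $\sum_{i=1}^{n-1}(\mathbf{a}^\top\phi_i)^2=\mathrm{vol}(X)\mathrm{vol}(\overline{X})/\mathrm{vol}(V(H))$. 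The explicit treatment of the degenerate cases is a minor addition not present in the paper but does not alter the approach.
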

\begin{proof}
Let $\mathbf{z}_X$ and $\mathbf{z}_Y$ be the $n$-dimensonal column vectors defined on $V(H)$ with
\[
\mathbf{z}_X(u)=\begin{cases}
1, & \mbox{if }u\in X\\
0, & \mbox{otherwise}
\end{cases}  \mbox{ and }
\mathbf{z}_Y(u)=\begin{cases}
1, & \mbox{if }u\in Y,\\
0, & \mbox{otherwise}.
\end{cases}
\]
Note that
\[
\mathbf{z}_X^\top A(H)\mathbf{z}_Y=\sum_{u\in X}(A\mathbf{z}_Y)_{u}=\sum_{u\in X}\sum_{v\in Y}\frac{e_{uv}}{k-1}=\frac{m(X,Y)}{k-1}
\]
and
\begin{align*}
\mathbf{z}_X^\top A(H)\mathbf{z}_Y&=\mathbf{z}_X^\top D(H)^{1/2} (I_n-\mathcal{L}(H))D(H)^{1/2}\mathbf{z}_Y\\
&=\left(D(H)^{1/2}\mathbf{z}_X\right)^\top \left(I_n-\mathcal{L}(H)\right)\left(D(H)^{1/2}\mathbf{z}_Y\right).
\end{align*}
Let $\mathbf{x}_1,\dots,\mathbf{x}_n$ be the orthonormal eigenvectors of $\mathcal{L}(H)$ with $\mathcal{L}(H)\mathbf{x}_i=\lambda_i(H)\mathbf{x}_i$ for $i=1,\dots,n$, where we may choose $\mathbf{x}_n=\frac{1}{\sqrt{\mathrm{vol}(V(H))}}D(H)^{1/2}\mathbf{1}_n$ by Theorem \ref{n0}.
Assume that $D(H)^{1/2}\mathbf{z}_X=\sum_{i=1}^na_i\mathbf{x}_i$ and $D(H)^{1/2}\mathbf{z}_Y=\sum_{i=1}^nb_i\mathbf{x}_i$ for $i=1,
\dots, n$.
Then
\begin{align*}
\mathbf{z}_X^\top A(H)\mathbf{z}_Y&=\left(\sum_{i=1}^na_i\mathbf{x}_i\right)^\top (I_n-\mathcal{L}(H))\left( \sum_{i=1}^nb_i\mathbf{x}_i\right)\\
&=\sum_{i=1}^na_ib_i(1-\lambda_i(H))\\
&=a_nb_n+\sum_{i=1}^{n-1}a_ib_i(1-\lambda_i(H)),
\end{align*}
so
\[
\left|\frac{m(X,Y)}{k-1}-a_nb_n \right|=\left|\sum_{i=1}^{n-1}a_ib_i(1-\lambda_i(H)) \right|.
\]
Note that
\[
a_n=\mathbf{x}_n^\top D(H)^{1/2}\mathbf{z}_X=\frac{\mathrm{vol}(X)}{\sqrt{\mathrm{vol}(V(H))}}, \
b_n=\mathbf{x}_n^\top D(H)^{1/2}\mathbf{z}_Y=\frac{\mathrm{vol}(Y)}{\sqrt{\mathrm{vol}(V(H))}},
\]
and
\[
\sum_{i=1}^{n-1}a_i^2=\frac{\mathrm{vol}(X)\mathrm{vol}(\overline{X})}{\mathrm{vol}(V(H))}, \ \sum_{i=1}^{n-1}b_i^2=\frac{\mathrm{vol}(Y)\mathrm{vol}(\overline{Y})}{\mathrm{vol}(V(H))}.
\]
So, by the Cauchy-Schwarz inequality,
\begin{align*}
\left|\frac{m(X,Y)}{k-1}-\frac{\mathrm{vol}(X)\mathrm{vol}(Y)}{\mathrm{vol}(V(H))}\right|&=\left|\sum_{i=1}^{n-1}a_ib_i(1-\lambda_i(H))\right|\\
&\le \lambda\left| \sum_{i=1}^{n-1}a_ib_i\right|\\
&\le \lambda\sqrt{\sum_{i=1}^{n-1}a_i^2\sum_{i=1}^{n-1}b_i^2}\\
&=\lambda \frac{\sqrt{\mathrm{vol}(X)\mathrm{vol}(\overline{X})\mathrm{vol}(Y)\mathrm{vol}(\overline{Y})}}{\mathrm{vol}(V(H))},
\end{align*}
as desired.
\end{proof}


Recall that $\alpha (H)$ denotes the independence number of a hypergraph $H$.

\begin{corollary}
Let $H$ be a $k$-uniform hypergrpah on $n$ vertices and $m$ edges with minimum degree $\delta$. Then
\[
\alpha(H)\le \frac{km\lambda }{(1+\lambda)\delta},
\]
where $\lambda=\max\{|1-\lambda_i(H)|:i=1,\dots,n-1 \}$.
\end{corollary}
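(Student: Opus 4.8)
The plan is to derive this bound as a direct consequence of the discrepancy inequality in Theorem \ref{dis}, applied to a maximum independent set with the two sets chosen to be equal. First I would let $X$ be an independent set of $H$ with $|X|=\alpha(H)$. Since $X$ is independent, no edge of $H$ contains two vertices of $X$, so $e_{uv}=0$ for every pair $\{u,v\}\subseteq X$, and therefore $m(X,X)=\sum_{u\in X}\sum_{v\in X}e_{uv}=0$. The key idea of the whole argument is precisely this choice $Y=X$ with $X$ independent: it kills the ``actual'' edge count $m(X,X)$ and leaves only the ``expected'' count, which Theorem \ref{dis} controls by the spectral gap $\lambda$.

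Next I would invoke Theorem \ref{dis} with $Y=X$. Because $m(X,X)=0$, the left-hand side reduces to $\frac{\mathrm{vol}(X)^2}{\mathrm{vol}(V(H))}$, while the right-hand side is $\lambda\,\frac{\mathrm{vol}(X)\,\mathrm{vol}(\overline{X})}{\mathrm{vol}(V(H))}$. As $H$ has no isolated vertices, $\mathrm{vol}(X)=\sum_{u\in X}d_u>0$, so I may divide both sides by $\frac{\mathrm{vol}(X)}{\mathrm{vol}(V(H))}$ to obtain $\mathrm{vol}(X)\le \lambda\,\mathrm{vol}(\overline{X})$.

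Then I would substitute $\mathrm{vol}(\overline{X})=\mathrm{vol}(V(H))-\mathrm{vol}(X)$ and solve the resulting linear inequality, which gives $(1+\lambda)\mathrm{vol}(X)\le \lambda\,\mathrm{vol}(V(H))$, that is, $\mathrm{vol}(X)\le \frac{\lambda}{1+\lambda}\mathrm{vol}(V(H))$. Finally, since $H$ is $k$-uniform with $m$ edges, summing degrees over all vertices counts each edge $k$ times, so $\mathrm{vol}(V(H))=km$; and from the minimum degree condition, $\mathrm{vol}(X)=\sum_{u\in X}d_u\ge \delta\,\alpha(H)$. Combining these, $\delta\,\alpha(H)\le \frac{\lambda}{1+\lambda}km$, and rearranging yields the asserted bound $\alpha(H)\le \frac{km\lambda}{(1+\lambda)\delta}$.

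As for the main obstacle: there is no genuine difficulty here once Theorem \ref{dis} is in hand. The entire proof is a short specialization of that inequality, and the only step requiring any thought is the opening observation that setting $Y=X$ for an independent set $X$ forces $m(X,X)=0$; everything after that is elementary algebra together with the two standard identities $\mathrm{vol}(V(H))=km$ and $\mathrm{vol}(X)\ge \delta\,\alpha(H)$.
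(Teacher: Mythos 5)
Your proposal is correct and follows exactly the same route as the paper: apply Theorem \ref{dis} with $Y=X$ a maximum independent set so that $m(X,X)=0$, deduce $\mathrm{vol}(X)\le\lambda\,\mathrm{vol}(\overline{X})$, and then use $\mathrm{vol}(V(H))=km$ together with $\mathrm{vol}(X)\ge\delta\,\alpha(H)$. No gaps.
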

\begin{proof}
Let $X=Y$ be a maximum independent set of $H$.
Then $|X|=\alpha(H)$ and $m(X,Y)=0$.
By Theorem \ref{dis}, we have
\[
\frac{\mathrm{vol}(X)^2}{\mathrm{vol}(V(H))}\le \lambda \frac{\mathrm{vol}(X)\mathrm{vol}(\overline{X})}{\mathrm{vol}(V(H))},
\]
so
$\mathrm{vol}(X)\le \lambda \mathrm{vol}(\overline{X})$.
Note that $\mathrm{vol}(\overline{X})=km-\mathrm{vol}(X)$.
So
\[
\mathrm{vol}(X)\le \frac{\lambda}{1+\lambda}km.
\]
As $\mathrm{vol}(X)\ge \delta|X|\ge \delta\alpha(H)$, we have
\[
\alpha(H)\le \frac{km\lambda }{(1+\lambda)\delta}. \qedhere
\]
\end{proof}

%
%

\noindent
{\bf Funding}  This work was supported by the National Natural Science Foundation of China (No.~12071158).

%
%
%
%
%
%

\end{document}